\numberwithin{equation}{section}
\numberwithin{figure}{section}
\theoremstyle{plain}
\newtheorem{thm}{Theorem}[section]
\theoremstyle{plain}
\newtheorem{prop}[thm]{Proposition}
\theoremstyle{remark}
\newtheorem{rem}[thm]{Remark}
\theoremstyle{plain}
\theoremstyle{plain}
\theoremstyle{plain}
\newtheorem{lem}[thm]{Lemma}
\theoremstyle{definition}
\newtheorem{example}[thm]{Example}
\theoremstyle{definition}
\newtheorem{defn}[thm]{Definition}
\begin{document}
	
\title{Idempotent states on Sekine quantum groups}

\author{Haonan Zhang}

\address{Laboratoire de Math\'ematiques, Universit\'e de Bourgogne Franche-Comt\'e, 25030 Besan\c con, France and Institute of Mathematics, Polish Academy of Sciences, ul. \'Sniadeckich 8, 00-956 Warszawa, Poland}

\email{haonan.zhang@edu.univ-fcomte.fr}

\subjclass[2010]{Primary: 20G42, 15A24.  Secondary: 60B15.}

\keywords{finite quantum groups, Sekine quantum groups, idempotent states, random walks}

\maketitle
\begin{abstract}
	Sekine quantum groups are a family of finite quantum groups. The main result of this paper is to compute all the idempotent states on Sekine quantum groups, which completes the work of Franz and Skalski. This is achieved by solving a complicated system of equations using linear algebra and basic number theory. From this we discover a new class of non-Haar idempotent states. The order structure of the idempotent states on Sekine quantum groups is also discussed. Finally we give a sufficient condition for the convolution powers of states on Sekine quantum group to converge. 
\end{abstract}

\section*{Introduction}
On a locally compact group, probability measures which are idempotent with respect to convolution arise as Haar measures on its compact subgroups. This was first proved by Kawada and It\^o \cite[Theorem 3]{KawadaIto}. See also \cite{Heyer} and references therein. Such a result fails for quantum groups. Namely, idempotent states on locally compact quantum groups are not necessarily of Haar type -- in other words, they are not \emph{Haar idempotent states}. Pal \cite{Pal} gave the first counterexample on a 8-dimensional Kac-Paljutkin quantum group. Indeed, he showed the existence of two idempotent states on this quantum group whose null-spaces are not self-adjoint. Such counterexamples are called \emph{non-Haar idempotent states}. Later on more non-Haar idempotent states have been found, see \cite{FSonidempotentstatesonquantumgroups}. Even simpler counterexamples can be easily constructed on group algebras of finite non-commutative groups, as pointed out in \cite{FSonidempotentstatesonquantumgroups}. 

Idempotent states also arise as limits in ergodic theorems for random walks \cite{FSergodicproperties}. The Ces\`aro limit of convolution powers of a state on a compact quantum group gives an idempotent state. It becomes the Haar state if the original state is suitably chosen (for example faithful). This is how Woronowicz constructed the Haar state \cite{CompactquantumgroupsWoronowicz}. We will say a few words on the convergence of convolution powers on \emph{Sekine quantum groups}, which are main examples of quantum groups we focus on in this paper.

For those who are interested in further applications of idempotent states, we refer to \cite{Hopfimages} and \cite{Poissonboundaries} for connections of idempotent states with Hopf images and Poisson boundaries, respectively. For more characterizations of idempotent states on compact quantum groups, see \cite{FSonidempotentstatesonquantumgroups,Newcharacterisationofidempotentstates}. 

Despite much progress on idempotent states, concrete forms of idempotent states on compact quantum groups have only been computed in a few examples. On classical (commutative) compact quantum groups, idempotent states, due to Kawada and It\^o \cite{KawadaIto}, are all Haar idempotents. On cocommutative quantum group $C^*(\Gamma)$ with $\Gamma$ a finite group, idempotent states are associated with subgroups of $\Gamma$, while Haar idempotent states correspond to the normal subgroups \cite{FSonidempotentstatesonquantumgroups}. Idempotent states on compact quantum groups $U_q(2),SU_q(2)$ and $SO_q(3)$ were determined by Franz, Skalski and Tomatsu \cite{FSTidempotentstatesonSU_2}. In \cite{FSonidempotentstatesonquantumgroups} Franz and Skalski also gave some examples of idempotent states on the so-called \emph{Sekine quantum groups}, including all the Haar idempotent states and some classes of non-Haar ones. Sekine quantum groups form a class of finite quantum groups of Kac-Paljutkin type introduced by Sekine \cite{Sekine}. The main purpose of this paper is to determine all the idempotent states on Sekine quantum groups, completing the work in \cite{FSonidempotentstatesonquantumgroups}. This is achieved in Theorem \ref{thm:non-Haar} by solving a complicated system of equations in Lemma \ref{characterization of idempotent states on Sekine quantum groups}. 

The plan of this paper is as follows. In Section 1 we recall the definition of Sekine quantum groups and describe their representation theory. Then in Section 2 we give the calculations of idempotent states in detail. It turns out that the set of idempotent states, other than Haar state, can be divided into three disjoint parts. In Section 3 we discuss the order structure of idempotent states. The last section a sufficient condition for convolution powers of states on Sekine quantum groups to converge is given.

\section{Sekine quantum groups}
Although the main subject of this paper lies in finite quantum groups, we begin with the discussion of compact quantum group. We refer to \cite{CompactquantumgroupsWoronowicz} and \cite{CompactquantumgroupsVanDaele} for more details on compact quantum groups.

\subsection{Compact quantum groups}
The following definition of compact quantum group is due to Woronowicz \cite{CompactquantumgroupsWoronowicz}.

\begin{defn}
	Let $A$ be a unital $C$*-algebra. If there exists a unital *-homomorphism $\Delta:A\to A\otimes A$ such that 
	\begin{enumerate}
		\item $(\Delta\otimes\iota)\Delta=(\iota\otimes\Delta)\Delta$;
		\item $\overline{\text{Lin}}\{\Delta(a)(1\otimes b):a,b\in A\}=\overline{\text{Lin}}\{\Delta(a)(b\otimes1):a,b\in A\}=A\otimes A$,
	\end{enumerate}
	then the pair $\mathbb{G}=(A,\Delta)$ is called a \emph{compact quantum group} and $\Delta$ is called the \emph{comultiplication} on $A$. Here $\iota$ denotes the identity map and $\otimes$ denotes the spatial tensor product of $C$*-algebras. Sometimes we write the underlying $C$*-algebra $A$ as the algebra of continuous functions on a quantum group $\mathbb{G}$, using the notation $A=C(\mathbb{G})$.
\end{defn}

Any compact quantum group $\mathbb{G}=(A,\Delta)$ admits a unique \emph{Haar state} $h$, which is a state (a positive normalised functional) on $A$ verifying the condition
\[
(h\otimes\iota)\Delta(a)=h(a)1=(\iota\otimes h)\Delta(a),~~a\in A.
\]
Consider an element $u\in A\otimes B(H)$ with $H$ a Hilbert space such that $\dim H=n$. By identifying $A\otimes B(H)$ with $\mathbb{M}_n(A)$ we can write $u=[u_{ij}]_{i,j=1}^{n}$. Such a matrix $u$ is called a \emph{n-dimensional representation} of $\mathbb{G}$ if we have
\[
\Delta(u_{ij})=\sum_{k=1}^{n}u_{ik}\otimes u_{kj},~~i,j=1,\dots,n.
\]
A representation $u$ is called \emph{unitary} if it is unitary as an element of $M_n(A)$, and \emph{irreducible} if the only matrices $T\in\mathbb{M}_n(\mathbb{C})$ such that $uT=Tu$ are multiples of identity matrix. Two representations $u,v\in\mathbb{M}_n(A)$ are said to be \emph{equivalent} if there exists an invertible matrix $T\in\mathbb{M}_n(\mathbb{C})$ such that $Tu=vT$. Denote by $\text{Irr}(\mathbb{G})$ the set of equivalence classes of irreducible unitary representations of $\mathbb{G}$. For each $\alpha\in\text{Irr}(\mathbb{G})$, denote by $u^\alpha=[u^\alpha_{ij}]_{i,j=1}^{n_\alpha}\in A\otimes B(H_\alpha)$ a representative of the class $\alpha$, where $H_\alpha$ is the finite dimensional Hilbert space on which $u^\alpha$ acts. In the sequel we write $n_\alpha=\dim H_\alpha$. Denote $\text{Pol}(\mathbb{G})=\text{span}\{u^\alpha_{ij}:1\leq i,j\leq n_\alpha, \alpha\in\text{Irr}(\mathbb{G})\}$. It is a dense subalgebra of $A$.
And it is known that $\text{Pol}(\mathbb{G})$ possesses a \emph{counit}, which is a *-homomorphism $\epsilon:\text{Pol}(\mathbb{G})\to\mathbb{C}$ such that
\[
(\epsilon\otimes\iota)\Delta(a)=a=(\iota\otimes\epsilon)\Delta(a),~~a\in\text{Pol}(\mathbb{G}).
\]

A compact quantum group $\mathbb{G}=(A,\Delta)$ is a \emph{finite quantum group} if the underlying $C$*-algebra $A$ is finite-dimensional. In this case $A=\text{Pol}(\mathbb{G})$. And the Haar state $h$ on a finite quantum group is also a trace \cite{Haarmeasureonfinitequantumgroups}. That is, $h(ab)=h(ba), a,b\in A.$

\subsection{Sekine quantum groups}
Sekine \cite{Sekine} introduced a family of finite quantum groups,  referred as \emph{Sekine quantum groups}, arising as bicrossed products of classical cyclic groups with the matched pair being $\mathbb{Z}_2$ and $\mathbb{Z}_k\times\mathbb{Z}_k$ \cite{VaesVainerman}. $\mathbb{Z}_2$ acts on $\mathbb{Z}_k\times\mathbb{Z}_k$ by permutation. We follow the notations in \cite{FSonidempotentstatesonquantumgroups} here. 

\begin{defn}
	Fix $k\geq 2$ an integer. Let $\eta$ be a primitive $k$-th root of $1$, say, $\eta=e^{\frac{2\pi i}{k}}$, and let $\mathbb{Z}_k:=\{0,1,\dots,k-1\}$ be the cyclic group with order $k$. Set 
	\[
	\mathcal{A}_k:=\bigoplus_{i,j\in \mathbb{Z}_k}\mathbb{C}d_{i,j}\oplus M_k(\mathbb{C}).
	\]
	Denote by $\{e_{i,j}:i,j\in\mathbb{Z}_k\}$ the matrix units of $M_k(\mathbb{C})$. The comultiplication on $\mathcal{A}_k$ is defined through:
	
	\begin{equation}\label{Def of Sekine 1}
	\Delta_k(d_{i,j}):=\sum_{m,n\in\mathbb{Z}_k}d_{m,n}\otimes d_{i-m,j-n}+\frac{1}{k}\sum_{m,n\in\mathbb{Z}_k}\eta^{i(m-n)}e_{m,n}\otimes e_{m+j,n+j},   
	\end{equation}
	
	\begin{equation}\label{Def of Sekine 2}
	\Delta_k(e_{i,j}):=\sum_{m,n\in \mathbb{Z}_k}\eta^{m(i-j)}d_{-m,-n}\otimes e_{i-n,j-n}+\sum_{m,n\in \mathbb{Z}_k}\eta^{m(j-i)}e_{i-n,j-n}\otimes d_{m,n},
	\end{equation}
	for $i,j\in \mathbb{Z}_k$. Then the pair $(\mathcal{A}_k,\Delta_k)$ forms a finite quantum group, called a \emph{Sekine quantum group}.
\end{defn}

\subsection{Representation of Sekine quantum groups}
We refer to \cite{JPMcCarthy} for more discussions on representation theory of Sekine quantum groups. Let $p,q\in\mathbb{Z}_k$. Then from \eqref{Def of Sekine 1} it follows that 
\begin{align*}
\sum_{i,j\in\mathbb{Z}_k}\eta^{ip+jq}\Delta_k(d_{i,j})
=&\sum_{m,n\in\mathbb{Z}_k}\eta^{mp+nq}d_{m,n}\otimes \sum_{i,j\in\mathbb{Z}_k}\eta^{ip+jq}d_{i,j}\\
&+\sum_{m\in\mathbb{Z}_k}\eta^{-mq}e_{m,m+p}\otimes\sum_{j\in\mathbb{Z}_k}\eta^{jq}e_{j,j+p},
\end{align*}
and from \eqref{Def of Sekine 2} it follows that
\begin{align*}
\sum_{i\in\mathbb{Z}_k}\eta^{iq}\Delta_k(e_{i,i+p})
=&\sum_{m,n\in \mathbb{Z}_k}\eta^{mp-nq}d_{m,n}\otimes\sum_{i\in \mathbb{Z}_k}\eta^{iq}e_{i,i+p}\\
&+\sum_{i\in\mathbb{Z}_k}\eta^{iq}e_{i,i+p}\otimes\sum_{m,n\in\mathbb{Z}_k} \eta^{mp+nq}d_{m,n}.
\end{align*}
Set $\rho_{p,q}:=\sum_{m,n\in\mathbb{Z}_k}\eta^{mp+nq}d_{m,n}$ and $\sigma_{p,q}:=\sum_{i\in \mathbb{Z}_k}\eta^{iq}e_{i,i+p}$ for all $p,q\in\mathbb{Z}_k$. Then the equations above can be rephrased as 
\begin{equation*}
\Delta_k(\rho_{p,q})=\rho_{p,q}\otimes\rho_{p,q}+\sigma_{p,-q}\otimes\sigma_{p,q},
\end{equation*}
\begin{equation*}
\Delta_k(\sigma_{p,q})=\rho_{p,-q}\otimes\sigma_{p,q}+\sigma_{p,-q}\otimes\rho_{p,q},
\end{equation*}
for all $p,q\in\mathbb{Z}_k$. This yields directly that for any $p,q\in\mathbb{Z}_k$,
\begin{equation*}
\pi_{p,q}:=\begin{pmatrix*}
\rho_{p,q} & \sigma_{p,-q}\\
\sigma_{p,q} & \rho_{p,-q}
\end{pmatrix*}
\end{equation*}
is a representation of $(\mathcal{A}_k,\Delta_k)$. Moreover, it is also unitary. To see this, note by definitions of $\rho_{p,q}$ and $\sigma_{r,s}$ that
\[
\rho_{p,q}^*=\rho_{-p,-q},~~\sigma_{r,s}^*=\eta^{rs}\sigma_{-r,-s},
\]
\[\rho_{p,q}\rho_{p',q'}=\rho_{p+p',q+q'},~~\rho_{p,q}\sigma_{r,s}=\sigma_{r,s}\rho_{p,q}=0,~~\sigma_{r,s}\sigma_{r',s'}=\eta^{rs'}\sigma_{r+r',s+s'},
\]
where $p,q,p',q',r,s,r',s'\in\mathbb{Z}_k$. Then 
\begin{equation*}
\pi_{p,q}^*\pi_{p,q}
=\begin{pmatrix*}
\rho_{-p,-q} & \eta^{pq}\sigma_{-p,-q}\\
\eta^{-pq}\sigma_{-p,q} & \rho_{-p,q}
\end{pmatrix*}
\begin{pmatrix*}
\rho_{p,q} & \sigma_{p,-q}\\
\sigma_{p,q} & \rho_{p,-q}
\end{pmatrix*}
=\begin{pmatrix*}
\rho_{0,0}+\sigma_{0,0} & 0\\
0 & \rho_{0,0}+\sigma_{0,0}
\end{pmatrix*},
\end{equation*}
while $\rho_{0,0}+\sigma_{0,0}=\sum_{p,q\in\mathbb{Z}_k}d_{p,q}+\sum_{r\in\mathbb{Z}_k}e_{r,r}=1_{\mathcal{A}_k}$ is the unit element. So $\pi_{p,q}^*\pi_{p,q}=\text{id}$. Similarly, we have $\pi_{p,q}\pi_{p,q}^*=\text{id}$.

When $q=0$, $\pi_{p,q}$ is unitarily equivalent to 
\begin{equation*}
\begin{pmatrix*}
\frac{1}{\sqrt{2}} & \frac{1}{\sqrt{2}}\\
\frac{1}{\sqrt{2}}& -\frac{1}{\sqrt{2}}
\end{pmatrix*}
\begin{pmatrix*}
\rho_{p,0} & \sigma_{p,0}\\
\sigma_{p,0} & \rho_{p,0}
\end{pmatrix*}
\begin{pmatrix*}
\frac{1}{\sqrt{2}} & \frac{1}{\sqrt{2}}\\
\frac{1}{\sqrt{2}}& -\frac{1}{\sqrt{2}}
\end{pmatrix*}
=\begin{pmatrix*}
\rho_{p,0}+\sigma_{p,0} & 0\\
0& \rho_{p,0}-\sigma_{p,0}
\end{pmatrix*}.
\end{equation*}
For the same reason, if $k$ is even, $\pi_{p,k/2}$ is unitarily equivalent to 
\begin{equation*}
\begin{pmatrix*}
\frac{1}{\sqrt{2}} & \frac{1}{\sqrt{2}}\\
\frac{1}{\sqrt{2}}& -\frac{1}{\sqrt{2}}
\end{pmatrix*}
\begin{pmatrix*}
\rho_{p,k/2} & \sigma_{p,k/2}\\
\sigma_{p,k/2} & \rho_{p,k/2}
\end{pmatrix*}
\begin{pmatrix*}
\frac{1}{\sqrt{2}} & \frac{1}{\sqrt{2}}\\
\frac{1}{\sqrt{2}}& -\frac{1}{\sqrt{2}}
\end{pmatrix*}
=\begin{pmatrix*}
\rho_{p,k/2}+\sigma_{p,k/2} & 0\\
0& \rho_{p,k/2}-\sigma_{p,k/2}
\end{pmatrix*}.
\end{equation*} 
In such cases $\pi_{p,q}$ can be decomposed into, up to equivalence, two one-dimensional unitary irreducible representations. So we have obtained $2k$ one-dimensional representations when $k$ is odd and $4k$ one-dimensional representations when $k$ is even. 

Moreover, $\pi_{p,q}$ is unitarily equivalent to $\pi_{p,-q}$ since
\begin{equation*}
\begin{pmatrix*}
0 & 1\\
1& 0
\end{pmatrix*}
\begin{pmatrix*}
\rho_{p,q} & \sigma_{p,-q}\\
\sigma_{p,q} & \rho_{p,-q}
\end{pmatrix*}
\begin{pmatrix*}
0 & 1\\
1& 0
\end{pmatrix*}
=\begin{pmatrix*}
\rho_{p,-q} & \sigma_{p,q}\\
\sigma_{p,-q} & \rho_{p,q}
\end{pmatrix*}.
\end{equation*}
It is known \cite{JPMcCarthy} that $\{\pi_{p,q}:1\leq q\leq [\frac{k-1}{2}]k\}$ are pairwise inequivalent two-dimensional irreducible representations. Note that although this was pointed out only for $k$ odd in \cite{JPMcCarthy}, it holds also for $k$ even, following a similar argument. 

Hence, up to equivalence, $(\mathcal{A}_k,\Delta_k)$ has $2k$ one dimensional unitary irreducible representations and $\frac{k(k-1)}{2}$ two-dimensional irreducible representations when $k$ is odd, $4k$ one dimensional unitary irreducible representations and $\frac{k(k-2)}{2}$ two-dimensional irreducible representations when $k$ is even. These are the only irreducible representations. Indeed, one can check this by verifying the dimension.

We end this section by introducing the \emph{Fourier transform} of linear functionals $\mu$ on $\in\mathcal{A}_k$, denoted by $\hat{\mu}$, at $\pi_{p,q}$:
\begin{equation*}
\hat{\mu}(\pi_{p,q})=\begin{pmatrix*}
\mu(\rho_{p,q}) & \mu(\sigma_{p,-q})\\
\mu(\sigma_{p,q}) & \mu(\rho_{p,-q})
\end{pmatrix*},~~p,q\in\mathbb{Z}_k.
\end{equation*}
It is easy to see that for any functionals $\mu,\nu$ on $\mathcal{A}_k$
\begin{equation}\label{Fourier transform}
\widehat{\mu\star\nu} (\pi_{p,q})=\hat{\mu}(\pi_{p,q})\hat{\nu}(\pi_{p,q}),~~p,q\in\mathbb{Z}_k,
\end{equation}
where $\mu\star\nu:=(\mu\otimes\nu)\Delta$ denotes the convolution of $\mu$ and $\nu$.

\section{Idempotent states on Sekine quantum groups}
For a compact quantum group $\mathbb{G}=(A,\Delta)$, denote by $A'$ the set of all linear functionals on $A$. Then for $\mu,\nu\in A'$ we can define the \emph{convolution} of $\mu$ and $\nu$, which we have seen earlier, as a linear functional on $A$ given by the formula
\[
\mu\star\nu:=(\mu\otimes\nu)\Delta.
\]
A state $\mu$ on $A$ is called an \emph{idempotent state} if $\mu\star\mu=\mu$. Denote the class of all idempotent states on $\mathbb{G}=(A,\Delta)$ by $\text{Idem}(\mathbb{G})$, or $\text{Idem}(A)$. Idempotent states on compact quantum groups have been characterized in different ways \cite{FSonidempotentstatesonquantumgroups,Newcharacterisationofidempotentstates}.

\begin{example}[Commutative case]
	If $A$ is commutative, then $\mathbb{G}$ is isomorphic to $(C(G),\Delta)$, where $C(G)$ denotes the set of continuous functions on a compact group $G$ and $\Delta$ is a comultiplication on $C(G)$ given by 
	\[
	\Delta(f)(s,t)=f(st),~~s,t\in G.
	\]
	In this case idempotent state on $C(G)$ arises as idempotent probability measure on $G$, which, by Kawada and It\^o's classical theorem, arises as the Haar measure on a compact subgroup of $G$. 
\end{example}

Let's recall the notion of a quantum subgroup here.

\begin{defn}
	If $(A,\Delta_A),(B,\Delta_B)$ are compact quantum groups and $\pi_B:A\to B$ is a surjective unital *-homomorphism such that $\Delta_B\circ\pi_B=(\pi_B\otimes\pi_B)\circ\Delta_A$, then $(B,\Delta_B)$ is called a \emph{quantum subgroup} of $(A,\Delta_A)$.
\end{defn} 

Let $h_B$ be the Haar state on $B$, then $h_B\circ\pi_B$ is an idempotent state on $A$. If an idempotent state $\phi$ on $A$ arises in this way, say, $\phi=h_B\circ\pi_B$ for some quantum subgroup $(B,\Delta_B)$ with Haar measure $h_B$ of $A$, then it is called a \emph{Haar idempotent state}. Otherwise, it is called a \emph{non-Haar idempotent state}. So Kawada and It\^o's theorem tells us that in the commutative case, all idempotent states are Haar idempotent states.

The existence of non-Haar idempotent state was first proved by Pal \cite{Pal} on a 8-dimensional Kac-Paljutkin quantum group. Even simpler examples come from cocommutative finite quantum groups \cite{FSonidempotentstatesonquantumgroups}.

\begin{example}[Cocommutative case]
	A finite quantum group $\mathbb{G}=(A,\Delta)$ is said to be \emph{cocommutative} if $\Pi\Delta=\Delta$, where $\Pi$ denotes the usual tensor flip on $A\otimes A$. Then $A$ is isomorphic to the group algebra $C^*(\Gamma)$ with $\Gamma$ a finite discrete group. Then there is a one-one correspondence between idempotent states on $A$ and subgroups of $\Gamma$. Moreover, there is a one-one correspondence between Haar idempotent states on $A$ and normal subgroups of $\Gamma$. So from a non-normal subgroup of $\Gamma$ one can construct a non-Haar idempotent.
\end{example}

More examples can be found on Sekine quantum groups \cite{FSonidempotentstatesonquantumgroups}. Indeed, a small class of non-Haar idempotent states on Sekine quantum groups $\mathcal{A}_k$ was  given in \cite[Proposition 6.6]{FSonidempotentstatesonquantumgroups}. See \eqref{FS nonHaar example 1} \eqref{FS nonHaar example 2} below for details.

Now we consider all the idempotent states on $\mathcal{A}_k$. Fix $k\geq 2$. On $\mathcal{A}'_k$ there is a natural basis:
\[
\widetilde{d}_{i,j}(d_{m,n}):=\delta^i_m\delta^j_n,~~ \widetilde{d}_{i,j}(e_{r,s}):=0,~~
\widetilde{e}_{i,j}(d_{m,n}):=0,~~\widetilde{e}_{i,j}(e_{r,s}):=\delta^i_r\delta^j_s,
\]
where $i,j,m,n,r,s\in\mathbb{Z}_k$. Using this basis, the explicit formula for the convolution of two linear functionals on $\mathcal{A}_k$ is given through the following lemma (See the discussion before \cite[Lemma 6.4]{FSonidempotentstatesonquantumgroups}).

\begin{lem}\label{main lemma}
	For $\mu=\sum_{i,j\in \mathbb{Z}_k}\alpha_{i,j}\widetilde{d}_{i,j}+\sum_{r,s\in\mathbb{Z}_k}\kappa_{r,s}\widetilde{e}_{r,s},
	\nu=\sum_{i,j\in \mathbb{Z}_k}\beta_{i,j}\widetilde{d}_{i,j}+\sum_{r,s\in\mathbb{Z}_k}\omega_{r,s}\widetilde{e}_{r,s}\in\mathcal{A}'_k$, we have 
	\[
	\mu\star\nu=\sum_{i,j\in \mathbb{Z}_k}\gamma_{i,j}\widetilde{d}_{i,j}+\sum_{r,s\in\mathbb{Z}_k}\theta_{r,s}\widetilde{e}_{r,s},
	\]
	with 
	\begin{equation*}
	\gamma_{i,j}=\sum_{m,n\in \mathbb{Z}_k}\alpha_{m,n}\beta_{i-m,j-n}+\frac{1}{k}\sum_{r,s\in\mathbb{Z}_k}\eta^{i(r-s)}\kappa_{r,s}\omega_{r+j,s+j},~~i,j\in \mathbb{Z}_k,
	\end{equation*}
	\begin{equation*}
	\theta_{r,s}=\sum_{i,j\in \mathbb{Z}_k}\eta^{i(s-r)}(\alpha_{i,j}\omega_{r+j,s+j}+\beta_{i,j}\kappa_{r-j,s-j}),~~r,s\in\mathbb{Z}_k.
	\end{equation*}
\end{lem}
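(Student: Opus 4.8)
The plan is to compute the convolution $\mu\star\nu=(\mu\otimes\nu)\Delta_k$ directly by evaluating it on the basis elements $d_{i,j}$ and $e_{r,s}$ of $\mathcal{A}_k$, and then reading off the coefficients $\gamma_{i,j}=(\mu\star\nu)(d_{i,j})$ and $\theta_{r,s}=(\mu\star\nu)(e_{r,s})$. Since $\mu\star\nu$ is defined as $(\mu\otimes\nu)\circ\Delta_k$, the entire computation reduces to applying the functional $\mu\otimes\nu$ to the explicit expressions for $\Delta_k(d_{i,j})$ and $\Delta_k(e_{r,s})$ given in \eqref{Def of Sekine 1} and \eqref{Def of Sekine 2}. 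The key point is that $\mu\otimes\nu$ acts on a simple tensor $a\otimes b$ by $(\mu\otimes\nu)(a\otimes b)=\mu(a)\nu(b)$, so the problem is purely a matter of substituting the definitions and using the dual-basis pairings.

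First I would compute $\gamma_{i,j}$. Applying $\mu\otimes\nu$ to \eqref{Def of Sekine 1}, the first sum $\sum_{m,n}d_{m,n}\otimes d_{i-m,j-n}$ contributes $\sum_{m,n}\mu(d_{m,n})\nu(d_{i-m,j-n})=\sum_{m,n}\alpha_{m,n}\beta_{i-m,j-n}$, using $\mu(d_{m,n})=\alpha_{m,n}$ and $\nu(d_{i-m,j-n})=\beta_{i-m,j-n}$. The second sum $\frac{1}{k}\sum_{m,n}\eta^{i(m-n)}e_{m,n}\otimes e_{m+j,n+j}$ contributes $\frac{1}{k}\sum_{m,n}\eta^{i(m-n)}\kappa_{m,n}\omega_{m+j,n+j}$; after relabeling the summation indices $(m,n)\to(r,s)$ this is exactly the stated second term of $\gamma_{i,j}$. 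The only subtlety here is keeping track of the phase factor $\eta^{i(m-n)}$ and the index shift $m+j,n+j$, but these carry over verbatim from the definition.

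Next I would compute $\theta_{r,s}$ by applying $\mu\otimes\nu$ to \eqref{Def of Sekine 2}, written for the index pair $(r,s)$ in place of $(i,j)$. The first sum $\sum_{m,n}\eta^{m(r-s)}d_{-m,-n}\otimes e_{r-n,s-n}$ gives $\sum_{m,n}\eta^{m(r-s)}\alpha_{-m,-n}\omega_{r-n,s-n}$, and the second sum $\sum_{m,n}\eta^{m(s-r)}e_{r-n,s-n}\otimes d_{m,n}$ gives $\sum_{m,n}\eta^{m(s-r)}\kappa_{r-n,s-n}\beta_{m,n}$. The main bookkeeping obstacle is that these two raw expressions do not yet match the symmetric form $\theta_{r,s}=\sum_{i,j}\eta^{i(s-r)}(\alpha_{i,j}\omega_{r+j,s+j}+\beta_{i,j}\kappa_{r-j,s-j})$ claimed in the lemma; one must perform a change of summation variables in each term to bring them into the common index $(i,j)$. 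For the first sum I would substitute $i=-m$, $j=-n$, turning $\eta^{m(r-s)}$ into $\eta^{-i(r-s)}=\eta^{i(s-r)}$ and $\alpha_{-m,-n}\omega_{r-n,s-n}$ into $\alpha_{i,j}\omega_{r+j,s+j}$. For the second sum I would substitute $i=m$, then rename the shift index so that $\eta^{m(s-r)}=\eta^{i(s-r)}$ pairs $\beta_{i,\cdot}$ with $\kappa_{r-j,s-j}$; matching the $j$-dependence precisely is where care is required.

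I expect the substitutions in the $\theta_{r,s}$ computation to be the main obstacle, not because any step is deep but because there are several phase factors $\eta^{\bullet}$ and index shifts that must be reconciled simultaneously, and a sign error in the exponent or an off-by-one in a shift would break the claimed symmetric form. Once both changes of variables are carried out and the common factor $\eta^{i(s-r)}$ is extracted, the two contributions combine into the single sum stated in the lemma, completing the proof. Throughout I would rely only on the explicit comultiplication formulas and the defining pairings of the dual basis, so no auxiliary results beyond \eqref{Def of Sekine 1} and \eqref{Def of Sekine 2} are needed.
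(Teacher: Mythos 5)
Your proposal is correct: evaluating $(\mu\otimes\nu)\Delta_k$ on the basis elements $d_{i,j}$ and $e_{r,s}$ and performing the index substitutions $(i,j)=(-m,-n)$ in the first sum and $(i,j)=(m,n)$ in the second indeed reproduces the stated formulas for $\gamma_{i,j}$ and $\theta_{r,s}$, with all phases $\eta^{i(s-r)}$ and shifts matching. This is exactly the computation the paper relies on (it defers to the discussion preceding Lemma 6.4 of Franz--Skalski rather than writing it out), so your argument is essentially the same as the intended one.
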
	

This gives a characterization of $\text{Idem}(\mathcal{A}_k)$:

\begin{lem}[\cite{Sekine,FSonidempotentstatesonquantumgroups}]\label{characterization of idempotent states on Sekine quantum groups}
	$\mu=\sum_{i,j\in \mathbb{Z}_k}\alpha_{i,j}\widetilde{d}_{i,j}+\sum_{r,s\in\mathbb{Z}_k}\kappa_{r,s}\widetilde{e}_{r,s}\in\mathcal{A}'_k$ is an idempotent state if and only if $\alpha_{i,j}\geq 0$ for all $i,j\in\mathbb{Z}_k$, $K:=[\kappa_{r,s}]_{r,s\in\mathbb{Z}_k}$ is positive semi-definite, and the following equations hold:  
	\begin{equation}\label{Equation A}
	\alpha_{i,j}=\sum_{r,s\in \mathbb{Z}_k}\alpha_{i-r,j-s}\alpha_{r,s}+\frac{1}{k}\sum_{r,s\in\mathbb{Z}_k}\eta^{i(r-s)}\kappa_{r,s}\kappa_{r+j,s+j},~~i,j\in \mathbb{Z}_k,
	\end{equation}
	\begin{equation}\label{Equation B}
	\kappa_{r,s}=\sum_{i,j\in \mathbb{Z}_k}\eta^{i(s-r)}\alpha_{i,j}(\kappa_{r+j,s+j}+\kappa_{r-j,s-j}),~~r,s\in\mathbb{Z}_k,
	\end{equation}
	\begin{equation}\label{Equation C}
	\mu(1)=\sum_{i,j\in \mathbb{Z}_k}\alpha_{i,j}+\sum_{r\in\mathbb{Z}_k}\kappa_{r,r}=1.
	\end{equation}
\end{lem}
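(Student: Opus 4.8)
The plan is to characterize $\mathrm{Idem}(\mathcal{A}_k)$ by translating the single equation $\mu\star\mu=\mu$ into explicit conditions on the coefficients $\{\alpha_{i,j}\}$ and $\{\kappa_{r,s}\}$, together with the positivity constraints that make $\mu$ an actual state. First I would recall that, by the general theory, a linear functional $\mu$ on the finite-dimensional $C^*$-algebra $\mathcal{A}_k=\bigoplus_{i,j}\mathbb{C}d_{i,j}\oplus M_k(\mathbb{C})$ is a state precisely when it is positive and normalized. Since the $d_{i,j}$ span the commutative (diagonal) part, positivity there forces $\alpha_{i,j}=\mu(d_{i,j})\geq 0$ for all $i,j$; and restricted to the matrix block $M_k(\mathbb{C})$, the functional is $X\mapsto \mathrm{Tr}(K^{t}X)$ where $K=[\kappa_{r,s}]$, so positivity on $M_k(\mathbb{C})$ is equivalent to $K$ being positive semi-definite. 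Normalization is exactly $\mu(1)=1$, which unwinds via $1_{\mathcal{A}_k}=\sum_{i,j}d_{i,j}+\sum_r e_{r,r}$ to equation \eqref{Equation C}. This handles the positivity/normalization side and shows those three conditions are necessary and sufficient for $\mu$ to be a state, independently of idempotency.

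Next I would impose idempotency. The key tool is Lemma \ref{main lemma}, which gives the coefficients of $\mu\star\nu$ in the natural basis. Setting $\nu=\mu$, i.e. $\beta_{i,j}=\alpha_{i,j}$ and $\omega_{r,s}=\kappa_{r,s}$, the condition $\mu\star\mu=\mu$ means the output coefficients $\gamma_{i,j}$ and $\theta_{r,s}$ equal $\alpha_{i,j}$ and $\kappa_{r,s}$ respectively. Reading off the two displayed formulas in Lemma \ref{main lemma} with these substitutions yields exactly equations \eqref{Equation A} and \eqref{Equation B}: the first comes from $\gamma_{i,j}=\alpha_{i,j}$ after substituting $\beta_{i-m,j-n}=\alpha_{i-m,j-n}$ and relabeling the summation index $(m,n)\to(i-r,j-s)$ in the diagonal part, while the second comes from $\theta_{r,s}=\kappa_{r,s}$ after combining the two terms $\alpha_{i,j}\omega_{r+j,s+j}+\beta_{i,j}\kappa_{r-j,s-j}$ into $\alpha_{i,j}(\kappa_{r+j,s+j}+\kappa_{r-j,s-j})$. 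So the equational part of the statement is essentially a direct specialization of the convolution formula, requiring only careful bookkeeping of indices.

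The remaining, and slightly more delicate, point is the logical structure of the equivalence. I would argue that $\mu$ is an idempotent state if and only if $\mu$ is a state \emph{and} $\mu\star\mu=\mu$, and then combine the two analyses: being a state supplies $\alpha_{i,j}\geq 0$, $K\succeq 0$, and \eqref{Equation C}, while idempotency supplies \eqref{Equation A} and \eqref{Equation B}. One subtlety to check is that \eqref{Equation C} is genuinely the normalization of $\mu$ rather than a consequence of idempotency: for a generic idempotent functional one only gets $\mu(1)\in\{0,1\}$, and the value $1$ is exactly what the state condition enforces (a nonzero positive functional that is idempotent has $\mu(1)=1$ by evaluating $\widehat{\mu\star\mu}(\pi_{0,0})=\hat\mu(\pi_{0,0})^2=\hat\mu(\pi_{0,0})$ using \eqref{Fourier transform}, whose diagonal entries include $\mu(1)$-type quantities). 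For the converse direction, assuming all listed conditions, positivity and \eqref{Equation C} make $\mu$ a state, while \eqref{Equation A}--\eqref{Equation B} force every coefficient of $\mu\star\mu$ to agree with that of $\mu$, hence $\mu\star\mu=\mu$.

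I expect the main obstacle to be purely notational: matching the index conventions of Lemma \ref{main lemma} to the stated equations without sign or shift errors in the $\eta$-powers, particularly verifying that $\eta^{i(r-s)}$ and $\eta^{i(s-r)}$ appear in the correct equations and that the relabeling $(m,n)\mapsto(i-r,j-s)$ preserves the convolution structure after using $\beta=\alpha$. There is no deep analytic content beyond the standard identification of states on a multi-matrix algebra with a diagonal probability-like vector plus a positive semi-definite block; the entire substance is algebraic specialization, so I would present the proof as a short computation citing Lemma \ref{main lemma} and the structure of states on $\mathcal{A}_k$.
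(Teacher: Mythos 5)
Your proposal is correct and matches the paper's intended argument exactly: the paper presents this lemma as an immediate consequence of Lemma \ref{main lemma} (specialized to $\nu=\mu$) together with the standard description of states on the multi-matrix algebra $\mathcal{A}_k$ (nonnegative coefficients on the commutative part, positive semi-definite $K$ on the matrix block, and normalization giving \eqref{Equation C}). Your extra remark on $\mu(1)\in\{0,1\}$ is harmless but unnecessary, since normalization is already part of the definition of a state.
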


Certainly the Haar state on $\mathcal{A}_k$ 
\[
h_{\mathcal{A}_k}:=\frac{1}{2k^2}\sum_{i,j\in \mathbb{Z}_k}\widetilde{d}_{i,j}+\frac{1}{2k}\sum_{r\in\mathbb{Z}_k}\widetilde{e}_{r,r},
\]  
is a Haar idempotent. Franz and Skalski have determined all the Haar idempotents \cite[Theorem 6.5]{FSonidempotentstatesonquantumgroups} on $\mathcal{A}_k$. See also Proposition \ref{Haar idempotent states} below. They have also given some examples of non-Haar idempotents \cite[Proposition 6.6]{FSonidempotentstatesonquantumgroups}:
\begin{equation}\label{FS nonHaar example 1}
\phi_l=\frac{1}{2k}\sum_{i\in\mathbb{Z}_k}\widetilde{d}_{i,0}+\frac{1}{2}\widetilde{e}_{l,l},~~l\in\mathbb{Z}_k.
\end{equation}
There are certainly other non-Haar idempotents for special $k$'s, as pointed out at the end of \cite{FSonidempotentstatesonquantumgroups} with the following examples:
\begin{equation}\label{FS nonHaar example 2}
\frac{1}{4km}\sum_{i\in\mathbb{Z}_k}\sum_{l=0}^{m-1}\widetilde{d}_{i,lp}+\frac{1}{2m}\sum_{l=0}^{m-1}\widetilde{e}_{lp,lp},
\end{equation}
whenever $k=pm$ and $p,m\in\mathbb{N}$ such that $p,m\geq2$. 

With the help of some elementary number theoretic considerations, we solve the equations in Corollary \ref{characterization of idempotent states on Sekine quantum groups}. 
We will see in the following that the set of idempotent states, other than the Haar state, can be divided into three disjoint classes, denoted by $\mathcal{I}_1(\mathcal{A}_k),\mathcal{I}_2(\mathcal{A}_k)$ and $\mathcal{I}_3(\mathcal{A}_k)$. $\mathcal{I}_1(\mathcal{A}_k)$ consists of all Haar idempotents except the Haar state. $\mathcal{I}_2(\mathcal{A}_k)$ are non-Haar idempotents such that the corresponding matrix $K=[\kappa_{r,s}]_{r,s\in\mathbb{Z}_k}$ is diagonal, which include both \eqref{FS nonHaar example 1} and \eqref{FS nonHaar example 2} as subclasses. The third class $\mathcal{I}_3(\mathcal{A}_k)$, which are non-Haar idempotent states with $K$ not diagonal, is an unexpected new discovery.

Note first that if $\mu=\sum_{i,j\in \mathbb{Z}_k}\alpha_{i,j}\widetilde{d}_{i,j}+\sum_{r,s\in\mathbb{Z}_k}\kappa_{r,s}\widetilde{e}_{r,s}$ is an idempotent state, then from \eqref{Equation B} it follows that 
\begin{equation}\label{Equation B bis}
\sum_{r\in\mathbb{Z}_k}\kappa_{r,r}
=2\sum_{i,j\in\mathbb{Z}_k}\alpha_{i,j}\cdot\sum_{r\in\mathbb{Z}_k}\kappa_{r,r}.
\end{equation}
Together with \eqref{Equation C}, we have either 
\begin{equation}\label{Case 1:Tr(K)=0}
\sum_{i,j\in \mathbb{Z}_k}\alpha_{i,j}=1,\sum_{r\in\mathbb{Z}_k}\kappa_{r,r}=0,
\end{equation}	  
or 
\begin{equation}\label{Case 2:Tr(K)=1/2}
\sum_{i,j\in \mathbb{Z}_k}\alpha_{i,j}=\sum_{r\in\mathbb{Z}_k}\kappa_{r,r}=\frac{1}{2}.
\end{equation}	  

The following proposition characterizes all the idempotents verifying \eqref{Case 1:Tr(K)=0}. By Theorem 6.5 in \cite{FSonidempotentstatesonquantumgroups}, such idempotent states, together with the Haar state $h_{\mathcal{A}_k}$, form the family of Haar idempotents.

\begin{prop}\label{Haar idempotent states}
	Let $\mu=\sum_{i,j\in \mathbb{Z}_k}\alpha_{i,j}\widetilde{d}_{i,j}+\sum_{r,s\in\mathbb{Z}_k}\kappa_{r,s}\widetilde{e}_{r,s}\in \mathcal{A}'_k$. Then it is an idempotent state verifying $\sum_{i,j\in \mathbb{Z}_k}\alpha_{i,j}=1$ and $\sum_{r\in\mathbb{Z}_k}\kappa_{r,r}=0$ if and only if 
	\begin{enumerate}
		\item $\kappa_{r,s}=0$ for all $r,s\in\mathbb{Z}_k$;
		\item $\Gamma:=\{(i,j)\in\mathbb{Z}_k\times\mathbb{Z}_k:\alpha_{i,j}\neq0\}$ is a subgroup of $\mathbb{Z}_k\times\mathbb{Z}_k$ and $\alpha_{i,j}=\frac{1}{\sharp\Gamma},(i,j)\in\Gamma$.
	\end{enumerate}
	Moreover, in this case, such an idempotent state is Haar idempotent. Conversely, any Haar idempotent is either equal to the Haar state $h_{\mathcal{A}_k}$ or of this form: 
	\[
	h_\Gamma:=\frac{1}{\sharp\Gamma}\sum_{(i,j)\in \Gamma}\widetilde{d}_{i,j},
	\]
	with $\Gamma$ a subgroup of $\mathbb{Z}_k\times\mathbb{Z}_k$.
\end{prop}	   

\begin{proof}
	Since $K=[\kappa_{r,s}]_{r,s\in\mathbb{Z}_k}\geq 0$ and $\text{Tr}(K)=\sum_{r\in\mathbb{Z}_k}\kappa_{r,r}=0$, we have $\kappa_{rs}=0$ for all $r,s\in \mathbb{Z}_k$. Then \eqref{Equation B} is trivial and \eqref{Equation A} becomes 
	\begin{equation}\label{Equation A bis}
	\alpha_{i,j}=\sum_{r,s\in \mathbb{Z}_k}\alpha_{i-r,j-s}\alpha_{r,s},~~i,j\in \mathbb{Z}_k.
	\end{equation}
	From $\sum_{i,j\in \mathbb{Z}_k}\alpha_{i,j}=1$ it follows that $\alpha_{i',j'}\neq0$ for some $(i',j')\in\mathbb{Z}_k\times\mathbb{Z}_k$. So $\Gamma\neq\emptyset$. From \eqref{Equation A bis} and the non-negativity of $\alpha_{i,j}$ we have that $(i_1,j_1)\in\Gamma$ and $(i_2,j_2)\in\Gamma$ imply $(i_1+i_2,j_1+j_2)\in\Gamma$. Thus $\Gamma$ is closed under group action. Moreover, $(0,0)=(ki',kj')\in\Gamma$, i.e., $\Gamma$ contains the unit. Set $M:=\max_{i,j\in\mathbb{Z}_k}\alpha_{i,j}$ and suppose that it is attained by $\alpha_{i_0,j_0}$ with $(i_0,j_0)\in\mathbb{Z}_k\times\mathbb{Z}_k$. Clearly $M>0$. From 
	\[
	M=\sum_{r,s\in \mathbb{Z}_k}\alpha_{i_0-r,j_0-s}\alpha_{r,s}\leq M\sum_{r,s\in \mathbb{Z}_k}\alpha_{r,s}=M,
	\] 
	it follows that $\alpha_{i_0-r,j_0-s}=M$ as long as $\alpha_{r,s}\neq0$. So $\sharp\Gamma\leq\sharp\{(i,j):\alpha_{i,j}=M\}\leq\sharp\Gamma$, that is to say, $\alpha_{i,j}=\frac{1}{\sharp\Gamma}$ for any $(i,j)\in\Gamma$. Now apply $(i_0,j_0)$ to $(0,0)$, we have $\alpha_{-r,-s}=M$ as long as $\alpha_{r,s}\neq0$, i.e., $(-r,-s)\in\Gamma$ if $(r,s)\in\Gamma$. So any $(r,s)\in\Gamma$ has its inverse $(-r,-s)$ in $\Gamma$. Hence $\Gamma$ is a subgroup of $\mathbb{Z}_k\times\mathbb{Z}_k$.
	
	The remaining is a direct consequence of Theorem 6.5 of \cite{FSonidempotentstatesonquantumgroups}.
\end{proof}	

\begin{rem}\label{remark of prop1}
	Let $\mu$ be as above. If $\mu=h_{\Gamma}$, we have 
	\begin{equation*}
	\hat{\mu}(\pi_{i,j})\in\left\{
	\begin{pmatrix*}
	1&0\\
	0&1
	\end{pmatrix*},
	\begin{pmatrix*}
	1&0\\
	0&0
	\end{pmatrix*},
	\begin{pmatrix*}
	0&0\\
	0&1
	\end{pmatrix*},
	\begin{pmatrix*}
	0&0\\
	0&0
	\end{pmatrix*}\right\},
	~~i,j\in\mathbb{Z}_k.
	\end{equation*}
\end{rem}

The following theorem contains the main result of this paper, characterizing the set of idempotent states verifying \eqref{Case 2:Tr(K)=1/2}, which consists of the Haar state $h_{\mathcal{A}_k}$ and all 
the non-Haar idempotents.

Before this we need the following well-known \emph{B\'ezout's identity}: 

\begin{lem}[B\'ezout's identity]\label{Bezout's identity}
	For any integers $a,b\geq1$, there exist integers $m,n$ such that $ma+nb=\gcd(a,b)$, where $\gcd(a,b)$ denotes the greatest common divisor of $a$ and $b$.	
\end{lem}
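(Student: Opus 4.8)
The plan is to use the well-ordering principle applied to the set of positive integer combinations of $a$ and $b$. First I would set
\[
S:=\{ma+nb:m,n\in\mathbb{Z}\}\cap\mathbb{Z}_{\geq1}.
\]
This set is nonempty, since $a=1\cdot a+0\cdot b\in S$, so by the well-ordering principle it contains a least element $d=m_0a+n_0b$ for some integers $m_0,n_0$. The claim is then that $d=\gcd(a,b)$, which immediately yields the identity with $m=m_0$ and $n=n_0$.

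The key step is to show that $d$ divides both $a$ and $b$. I would argue this via the division algorithm: writing $a=qd+r$ with $0\leq r<d$, one has
\[
r=a-qd=(1-qm_0)a+(-qn_0)b,
\]
so $r$ is again an integer combination of $a$ and $b$. If $r\geq1$, then $r\in S$ with $r<d$, contradicting the minimality of $d$; hence $r=0$ and $d\mid a$. The same argument with the roles of $a$ and $b$ interchanged gives $d\mid b$, so $d$ is a common divisor of $a$ and $b$.

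Finally I would verify that $d$ is the \emph{greatest} such divisor. Any common divisor $c$ of $a$ and $b$ divides the combination $m_0a+n_0b=d$, and since $d\geq1$ this forces $c\leq d$. Therefore $d=\gcd(a,b)$, which completes the proof. I do not expect a genuine obstacle here, as the result is entirely standard; the only point needing care is the division-algorithm computation, which is exactly where the minimality of $d$ is exploited. An alternative route would be strong induction on $a+b$ following the Euclidean algorithm and back-substituting to express $\gcd(a,b)$ as a combination, but the well-ordering argument is shorter and avoids the bookkeeping.
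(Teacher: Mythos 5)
Your proof is correct and complete: the well-ordering argument on the set of positive integer combinations of $a$ and $b$, followed by the division-algorithm step to show the least element divides both $a$ and $b$, is the standard proof of B\'ezout's identity. Note that the paper itself offers no proof of this lemma at all (it is stated as a well-known fact and used as a black box), so there is nothing to compare against; your argument would serve as a valid self-contained justification, and it even supports the paper's subsequent remark that $(m,n)$ may be replaced by $(m+lb,n-la)$ to adjust signs.
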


Note that we can choose $m>0,n<0$ or $m<0,n>0$ freely. Indeed, we can replace the pair $(m,n)$ with $(m+lb,n-la)$ for any $l\in\mathbb{Z}$.

\begin{thm}\label{thm:non-Haar}
	Let $\mu=\sum_{i,j\in \mathbb{Z}_k}\alpha_{i,j}\widetilde{d}_{i,j}+\sum_{r,s\in\mathbb{Z}_k}\kappa_{r,s}\widetilde{e}_{r,s}\in \mathcal{A}'_k$. Then it is an idempotent state verifying $\sum_{i,j\in \mathbb{Z}_k}\alpha_{i,j}=\sum_{r\in\mathbb{Z}_k}\kappa_{r,r}=\frac{1}{2}$ if and only if $\mu$ is either:
	\begin{enumerate}
		\item [(1)] the Haar state $h_{\mathcal{A}_k}:=\frac{1}{2k^2}\sum_{i,j\in \mathbb{Z}_k}\widetilde{d}_{i,j}+\frac{1}{2k}\sum_{r\in\mathbb{Z}_k}\widetilde{e}_{r,r}$; or
		\item [(2)]
		\begin{equation*}
		h_{\Gamma,l}:=\frac{1}{2\sharp\Gamma}\sum_{(i,j)\in\Gamma}\widetilde{d}_{i,j}+\frac{q}{2k}\sum_{r\equiv l\mod q}\widetilde{e}_{r,r},
		\end{equation*}
		where $\Gamma=\mathbb{Z}_k\times q\mathbb{Z}_k$ with $q|k$, $q>1$, and $l\in\mathbb{Z}_q$; or
		\item [(3)]
		\begin{equation*}
		h_{\Gamma,l,\tau}:=\frac{1}{2\sharp\Gamma}\sum_{(i,j)\in\Gamma}\widetilde{d}_{i,j}+\frac{q}{2k}\sum_{r,s\equiv l\mod q}\tau_{s-r}\widetilde{e}_{r,s},    	  	  \end{equation*}
		where $\Gamma=p\mathbb{Z}_k\times q\mathbb{Z}_k$ with $p>1$ and $pq=k$, $l\in\mathbb{Z}_q$, and $\tau=(\tau_j)_{j\in q\mathbb{Z}_k}\in\{\pm1\}^{q\mathbb{Z}_k}$ such that 
		\begin{equation}\label{a formula for K to be positive}
		\sum_{j\in q\mathbb{Z}_k}\tau_j\eta^{ij}\geq0,~~i\in\mathbb{Z}_{k/q}.
		\end{equation}
	\end{enumerate} 
\end{thm}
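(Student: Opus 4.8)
The plan is to pass everything through the Fourier transform of \eqref{Fourier transform}. Since the family $\{\pi_{p,q}\}$ exhausts the irreducible representations and the functionals $\{\rho_{p,q},\sigma_{p,q}:p,q\in\mathbb Z_k\}$ span $\mathcal A_k$, the correspondence $\mu\mapsto(\hat\mu(\pi_{p,q}))_{p,q}$ is injective, so by \eqref{Fourier transform} the condition $\mu\star\mu=\mu$ is equivalent to the statement that every $2\times2$ matrix $\hat\mu(\pi_{p,q})$ is idempotent, $\hat\mu(\pi_{p,q})^2=\hat\mu(\pi_{p,q})$. Writing $a_{p,q}:=\mu(\rho_{p,q})=\sum_{m,n}\eta^{mp+nq}\alpha_{m,n}$ and $b_{p,q}:=\mu(\sigma_{p,q})=\sum_i\eta^{iq}\kappa_{i,i+p}$, the four entrywise identities of $\hat\mu(\pi_{p,q})^2=\hat\mu(\pi_{p,q})$ read $a_{p,\pm q}=a_{p,\pm q}^2+b_{p,q}b_{p,-q}$ together with $b_{p,\pm q}\,(a_{p,q}+a_{p,-q}-1)=0$; these are exactly the Fourier forms of \eqref{Equation A} and \eqref{Equation B}. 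Under the hypothesis \eqref{Case 2:Tr(K)=1/2}, evaluating at $(p,q)=(0,0)$ gives the rank-one idempotent $\hat\mu(\pi_{0,0})=\tfrac12\left(\begin{smallmatrix}1&1\\1&1\end{smallmatrix}\right)$, which is what separates the present case from the identity matrix appearing in Proposition \ref{Haar idempotent states}.

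First I would determine the $d$-part. Since $\alpha_{m,n}\ge0$ and $\sum_{m,n}\alpha_{m,n}=\tfrac12$ we have $|a_{p,q}|\le\tfrac12$. If $b_{p,q}$ or $b_{p,-q}$ is nonzero, then $a_{p,q}+a_{p,-q}=1$, which together with $|a_{p,\pm q}|\le\tfrac12$ forces $a_{p,q}=a_{p,-q}=\tfrac12$; otherwise $b_{p,q}b_{p,-q}=0$ and the $(1,1)$-identity gives $a_{p,q}=a_{p,q}^2$, hence $a_{p,q}=0$. Thus $a_{p,q}\in\{0,\tfrac12\}$ for all $p,q$, with $a_{p,q}=\tfrac12$ precisely when $\eta^{mp+nq}=1$ on $\Gamma:=\{(m,n):\alpha_{m,n}\neq0\}$, i.e. when $(p,q)\in\Gamma^{\perp}$. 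As an annihilator is automatically a subgroup, Fourier inversion and orthogonality of characters then show that $\Gamma$ is a subgroup of $\mathbb Z_k\times\mathbb Z_k$ and $\alpha\equiv\tfrac1{2\sharp\Gamma}$ on it, exactly as in the claimed formulas. Two by-products drive the rest: $b_{p,q}\neq0\Rightarrow(p,q)\in\Gamma^{\perp}$, and, crucially, at every $(p,q)\in\Gamma^{\perp}$ the $(1,1)$-identity reads $\tfrac12=\tfrac14+b_{p,q}b_{p,-q}$, so $b_{p,q}b_{p,-q}=\tfrac14$; in particular $b_{p,q}$ and $b_{p,-q}$ are nonzero throughout $\Gamma^{\perp}$.

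Next I would read off the matrix part $K$ and split the two non-Haar families. Let $q$ be the divisor with $\pi_2(\Gamma)=q\mathbb Z_k$. The diagonal $(\kappa_{i,i})_i$ has Fourier transform $b_{0,\cdot}$ supported on $(q\mathbb Z_k)^{\perp}=\tfrac kq\mathbb Z_k$, hence is $q\mathbb Z_k$-periodic; moreover $b_{0,t}b_{0,-t}=\tfrac14$ forces $|b_{0,t}|=\tfrac12$ for every $t\in\tfrac kq\mathbb Z_k$, and the resulting equality case of the triangle inequality confines the support of $(\kappa_{i,i})_i$ to a single residue class $\{i\equiv l\bmod q\}$, on which periodicity then makes it uniform with value $\tfrac q{2k}$. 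The dichotomy between (2) and (3) is whether the offsets $\pi_1(\Gamma^{\perp})$ reduce to $\{0\}$: if so, $K$ is diagonal and $\Gamma^{\perp}\subseteq\{0\}\times\mathbb Z_k$ forces $\Gamma=\mathbb Z_k\times q\mathbb Z_k$ with $q>1$, giving (2). If not, the identity $b_{p,q}b_{p,-q}=\tfrac14$ at \emph{every} point of $\Gamma^{\perp}$ makes the full Fourier support of $K$ equal to $\Gamma^{\perp}$; matching this against the single-class, periodic structure just found is consistent only when $\Gamma=p\mathbb Z_k\times q\mathbb Z_k$ with $pq=k$, and then one reads off that $K$ is supported on the block $\{r,s\equiv l\bmod q\}$ with the Toeplitz form $\kappa_{r,s}=\tfrac q{2k}\tau_{s-r}$. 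B\'ezout's identity (Lemma \ref{Bezout's identity}) enters here to identify the subgroups of $\mathbb Z_k$ generated by the occurring offsets as $\gcd\cdot\mathbb Z_k$ and to force the divisibility relations $q\mid k$ and $pq=k$, the freedom in the signs of its coefficients being convenient in solving the attendant congruences; the non-product (skew) subgroups are excluded by combining this support-matching with the invariance of $\Gamma^{\perp}$ under $(p,q)\mapsto(p,-q)$, itself a direct consequence of $b_{p,q}b_{p,-q}=\tfrac14$.

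Finally, the block $\tfrac q{2k}[\tau_{s-r}]$ on $\{r\equiv l\bmod q\}$ is a circulant of size $k/q$, so its positive semidefiniteness is equivalent to the nonnegativity of its eigenvalues, which is exactly condition \eqref{a formula for K to be positive} (its $i=0$ instance forcing $\tau_0=1$, so that $\mathrm{Tr}(K)=\tfrac12$ holds automatically). For the converse I would run the analysis backwards: for each of the three families recompute $a_{p,q}$ and $b_{p,q}$, check directly that every $\hat\mu(\pi_{p,q})$ is idempotent and that $K\ge0$, so that each is genuinely an idempotent state. I expect the main obstacle to be the rigidity in the non-diagonal case: showing that the coupled phase sums $\eta^{i(s-r)}$ in \eqref{Equation B}, the identity $b_{p,q}b_{p,-q}=\tfrac14$, and the positivity of $K$ together admit no skew or non-maximal product supports and force precisely $pq=k$ with the Toeplitz structure. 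This is where the number theory, rather than the linear algebra, does the real work.
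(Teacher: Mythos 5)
Your Fourier-side reduction is sound and, through the diagonal analysis, genuinely cleaner than the paper's own treatment: the equivalence of $\mu\star\mu=\mu$ with $\hat\mu(\pi_{p,q})^2=\hat\mu(\pi_{p,q})$ for all $p,q$ is correct (the elements $\rho_{p,q},\sigma_{p,q}$ span $\mathcal{A}_k$), and from it you correctly obtain $a_{p,q}\in\{0,\tfrac12\}$, that $\Gamma$ is a subgroup carrying the uniform weight $\tfrac{1}{2\sharp\Gamma}$ (your Fourier-inversion argument replaces the paper's maximum-principle and B\'ezout arguments in its Claims 1 and 3), that $\mathrm{supp}(b)=\Gamma^\perp$ with $b_{p,q}b_{p,-q}=\tfrac14$ on $\Gamma^\perp$, and the single-residue-class structure of the diagonal of $K$. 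The genuine gap is that the entire content of the theorem is compressed into the unproved sentence ``matching this against the single-class, periodic structure just found is consistent only when $\Gamma=p\mathbb{Z}_k\times q\mathbb{Z}_k$ with $pq=k$.'' No mechanism is supplied, and the one device you do name for excluding skew subgroups---invariance of $\Gamma^\perp$ under $(p,q)\mapsto(p,-q)$---cannot do that job: the skew subgroup $\{(j,2j):j\in\mathbb{Z}_4\}\leq\mathbb{Z}_4\times\mathbb{Z}_4$ is flip-invariant because $2j\equiv -2j \pmod 4$, and it is ruled out only by the \emph{sign} in $b_{p,q}b_{p,-q}=+\tfrac14$, which your outline never exploits.

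Worse, the missing step cannot be supplied, because the claim you need (and with it the ``only if'' direction of the theorem as stated) is false for composite $k$. On $\mathcal{A}_4$ (so $\eta=i$) consider
\begin{equation*}
\mu=\frac{1}{16}\sum_{i\in\{0,2\},\,j\in\mathbb{Z}_4}\widetilde{d}_{i,j}
+\frac{1}{8}\sum_{r\in\mathbb{Z}_4}\widetilde{e}_{r,r}
+\frac{1}{8}\sum_{r\in\mathbb{Z}_4}\widetilde{e}_{r,r+2}.
\end{equation*}
Here $\alpha\geq0$ and $K=\tfrac18(I+P)\geq0$, where $P$ is the permutation matrix of $r\mapsto r+2$; moreover $\mu(\rho_{p,q})=\mu(\sigma_{p,q})=\tfrac12$ for $(p,q)\in\{(0,0),(2,0)\}$ and $=0$ otherwise, so every $\hat\mu(\pi_{p,q})$ equals either $\tfrac12\left(\begin{smallmatrix}1&1\\1&1\end{smallmatrix}\right)$ or $0$ and is idempotent; by your own first step (or by checking \eqref{Equation A}--\eqref{Equation C} directly) $\mu$ is an idempotent state with $\sum\alpha_{i,j}=\sum_r\kappa_{r,r}=\tfrac12$. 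But its support group is $\Gamma=2\mathbb{Z}_4\times\mathbb{Z}_4$, i.e.\ $p=2$, $q=1$, $pq=2\neq k$, and $K$ is neither diagonal nor has all block entries of modulus $\tfrac{q}{2k}$: it is of none of the forms (1)--(3). The paper's own proof breaks at exactly the same spot: in its Claim 4, the assertion that $V'X'=0$ with $\kappa^2_{l,l}\neq0$ ``requires $m<n$'' tacitly assumes every $m\times n$ submatrix of a root-of-unity Vandermonde matrix with $m\geq n$ has trivial kernel, which fails here (the columns of $V'$ indexed by $t=0,2$, over the rows $i\in\{1,3\}$, are $(1,1)^{T}$ and $(-1,-1)^{T}$). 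So your plan is correct precisely up to where the real difficulty begins, and there it fails not for lack of detail: the configurations your constraints still admit---$K$ supported on offsets in $\tfrac{k}{p}\mathbb{Z}_k$ with $pq$ a proper divisor of $k$---contain genuine idempotent states, and any correct classification must include them (for $k$ prime no such proper factorization exists, which is why the statement survives in that case).
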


\begin{rem}\label{remark of tau}
	\eqref{a formula for K to be positive} is equivalent to positive semi-definiteness of $[\kappa_{r,s}]_{r,s\in\mathbb{Z}_k}$. Such $\tau$ always exists as one can choose $\tau_j=1$ for all $j$, which is kind of trivial. It is not difficult to construct non-trivial ones. For example, when $k=2$, there is an another $\tau'$ with $\tau'_0=1$ and $\tau'_1=-1$ which satisfies \eqref{a formula for K to be positive}.
\end{rem}

\begin{proof}[Proof of Theorem \ref{thm:non-Haar}]
	Observe first that for any $j\in \mathbb{Z}_k$, $K=[\kappa_{r,s}]_{r,s\in \mathbb{Z}_k}\geq 0$ implies $[\kappa_{r+j,s+j}]_{r,s\in \mathbb{Z}_k}\geq 0$, thus their Hadamard product $[\kappa_{rs}\kappa_{r+j,s+j}]_{r,s\in \mathbb{Z}_k}\geq 0$. So we have for any $i,j\in \mathbb{Z}_k$
	\begin{equation}\label{Hadamard product}
	\sum_{r,s\in \mathbb{Z}_k}\eta^{i(r-s)}\kappa_{r,s}\kappa_{r+j,s+j}
	=\sum_{r,s\in \mathbb{Z}_k}\eta^{ir}\overline{\eta^{is}}\kappa_{r,s}\kappa_{r+j,s+j}
	\geq 0.
	\end{equation}
	Recall that $\Gamma=\{(i,j)\in\mathbb{Z}_k\times\mathbb{Z}_k:\alpha_{i,j}\ne0\}$. Since $\sum_{i,j\in\mathbb{Z}_k}\alpha_{i,j}=\frac{1}{2}$, there exists $(i',j')\in\mathbb{Z}_k\times\mathbb{Z}_k$ such that $\alpha_{i',j'}\neq 0$. This allows us to define 
	\[
	p:=\min \{i>0:(i,j)\in\Gamma\text{ for some } j\in \mathbb{Z}_k\},
	\]
	\[
	q:=\min \{j>0:(i,j)\in\Gamma\text{ for some } i\in \mathbb{Z}_k\}.
	\]	
	
	\textbf{Claim 1}: \emph{We have $p|k,q|k$, and for any $(i,j)\in\mathbb{Z}_k\times\mathbb{Z}_k$}
	
	\begin{equation}\label{claim 1}
	\alpha_{i,j}\neq 0 \Rightarrow p|i \text{ and } q|j.
	\end{equation}
	
	To show this, recall that $\alpha_{i,j}\geq 0$, so $\alpha_{i,j}\neq 0$ simply means $\alpha_{i,j}> 0$. From \eqref{Equation A} and the previous observation \eqref{Hadamard product}, it follows that 
	\begin{equation}\label{Obeservation 1}
	(i_1,j_1),(i_2,j_2)\in\Gamma\Rightarrow (m_1 i_1+m_2 i_2,m_1 j_1+m_2 j_2)\in\Gamma,~~m_1,m_2\in\mathbb{Z}_{\geq 0},
	\end{equation}
	and
	\begin{equation}\label{Obeservation 1 bis}
	(i_1,j_1)\notin\Gamma,(i_2,j_2)\in\Gamma \Rightarrow (i_1-m i_2, j_1-m j_2)\notin\Gamma,~~m\in\mathbb{Z}_{\geq 1},
	\end{equation}
	From Lemma \ref{Bezout's identity}, there exist integers $m,n>0$ such that $mp-nk=\gcd(p,k)\leq p$. Suppose that $(p,j_p)\in\Gamma$ for some $j_p>0$, then \eqref{Obeservation 1} and Lemma \ref{Bezout's identity} yield 
	\[
	(mp,mj_p)=(mp-nk,mj_p)=\left(\gcd(p,k),mj_p\right)\in\Gamma.
	\] 
	From the definition of $p$ we have $\gcd(p,k)=p$, i.e. $p|k$. 
	
	For any $\alpha_{i,j}\neq 0$, i.e. $(i,j)\in\Gamma$, there exist, by applying Lemma \ref{Bezout's identity} two times, integers $m,n,l>0$ such that $0<mp+ni-lk=\gcd(p,i,k)\leq p$. Thus \eqref{Obeservation 1} and Lemma \ref{Bezout's identity} yield
	\[
	(mp+ni,mj_p+nj)=(mp+ni-lk,mj_p+nj)=(\gcd(p,i,k),mj_p+nj)\in\Gamma.
	\]
	So $\gcd(p,i,k)=p$, i.e. $p|i$, which finishes the proof of \textbf{Claim 1} for $p$. The proof for $q$ is similar. \\
	
	\textbf{Claim 2}: \emph{Fix $t\in\mathbb{Z}_k$. Suppose that $|\kappa_{r_0,s_0}|=\max\{|\kappa_{r,s}|:(r,s)\in\mathbb{Z}_k\times\mathbb{Z}_k,r-s=t\}$. Then for any $(i,j)\in\Gamma$, we have
		\begin{equation}\label{claim 2}
		\kappa_{r_0,s_0}=\eta^{-it}\kappa_{r_0+j,s_0+j}=\eta^{-it}\kappa_{r_0-j,s_0-j}.
		\end{equation}
		Moreover, $\eta^{2pt}=1$.}\\
	
	In fact, from \eqref{Equation B} it follows 
	\[
	|\kappa_{r_0,s_0}|
	\leq\sum_{i,j\in \mathbb{Z}_k}\alpha_{i,j}|\eta^{-it}(\kappa_{r_0+j,s_0+j}+\kappa_{r_0-j,s_0-j})|
	\leq2|\kappa_{r_0,s_0}|\sum_{i,j\in \mathbb{Z}_k}\alpha_{i,j}
	=|\kappa_{r_0,s_0}|.
	\]
	Thus $\alpha_{i,j}\neq 0$ implies \eqref{claim 2}. Consequently, we have $|\kappa_{r_0+j, s_0+j}|=|\kappa_{r_0-j, s_0-j}|=|\kappa_{r_0,s_0}|$. Repeating this argument for $\kappa_{r_0+j,s_0+j}$ and $\kappa_{r_0-j,s_0-j}$, we have finally 
	\begin{equation}\label{relation1 of kappa_rs:same module}
	|\kappa_{r,s}|=|\kappa_{r_0,s_0}| \text{ for any $r,s$ such that }j|r-r_0=s-s_0,
	\end{equation}
	whenever $\alpha_{i,j}\neq0$ for some $i$. Moreover, \eqref{claim 2} implies $\kappa_{r_0, s_0}=\eta^{-it}\kappa_{r_0+j,s_0+j}=\eta^{-2it}\kappa_{r_0,s_0}$, so we have $\kappa_{r_0,s_0}\neq 0$ only if $\eta^{2it}=1$, which, by the definition of $p$, yields $\eta^{2pt}=1$. So \textbf{Claim 2} is proved.\\
	
	Recall that $\kappa_{r,r}\geq 0$ for all $r\in\mathbb{Z}_k$, since $K\geq 0$. From $\sum_{r\in\mathbb{Z}_k}\kappa_{r,r}=\frac{1}{2}$, we have $\kappa_{l,l}=\max_{r\in \mathbb{Z}_k}\kappa_{r,r}> 0$ for some $l$. Suppose $\alpha_{i_q,q}\neq 0$, then \eqref{relation1 of kappa_rs:same module} implies 
	\begin{equation}\label{kappa_rr nonzero}
	\kappa_{r,r}=\kappa_{l,l}>0, r\equiv l \mod{q}.
	\end{equation}
	For convenience, let $0\leq l <q$. From \eqref{Equation A} and \eqref{claim 1}, we have for any $i\in\mathbb{Z}_k$ and any $q\nmid j$
	\begin{equation*}
	\sum_{r,s\in \mathbb{Z}_k}\eta^{i(r-s)}\kappa_{r,s}\kappa_{r+j,s+j}=0.
	\end{equation*}
	So for any $q\nmid j$
	\begin{align*}
	0&=\sum_{i\in\mathbb{Z}_k}\sum_{r,s\in \mathbb{Z}_k}\eta^{i(r-s)}\kappa_{r,s}\kappa_{r+j,s+j}\\
	&=\sum_{r,s\in\mathbb{Z}_k}\kappa_{r,s}\kappa_{r+j,s+j}\sum_{i\in \mathbb{Z}_k}\eta^{i(r-s)}\\
	&=k\sum_{r\in \mathbb{Z}_k}\kappa_{r,r}\kappa_{r+j,r+j}.
	\end{align*}
	
	Thus $\kappa_{r,r}\kappa_{r+j,r+j}=0$ whenever $r\in\mathbb{Z}_k$ and $q\nmid j$. Combining this with \eqref{kappa_rr nonzero} we obtain 
	\begin{equation}\label{relation of kappa_rr}
	\kappa_{r,r}
	=\begin{cases}
	\frac{q}{2k} & r\equiv l \mod{q}\\
	0 & \text{otherwise }
	\end{cases}.
	\end{equation}
	From this and the positive semi-definitiveness of $K$ we have 
	\begin{equation}\label{relation2 of kappa_rs: zero if}
	\kappa_{r,s}=0, \text{ if either } q\nmid r-l \text{ or } q\nmid s-l.
	\end{equation} 
	So it remains to compute the submatrix $[\kappa_{r,s}]_{r,s\equiv l\mod{q}}$. For this set 
	\begin{equation*}
	p':=\min\{i>0:(i,0)\in\Gamma\},
	\end{equation*}
	\begin{equation*}
	q':=\min\{j>0:(0,j)\in\Gamma\}.
	\end{equation*}
	These are well-defined, since $(k,0)=(0,k)=(0,0)\in\Gamma$. Indeed, suppose $(i',j')\in\Gamma$, then from \eqref{Obeservation 1} it follows $(0,0)=(ki',kj')\in\Gamma$.
	
	We have also by \eqref{Obeservation 1}
	\begin{equation}\label{property of p' and q'}
	p'|i\text{ and }q'|j\Rightarrow(i,j)\in\Gamma.
	\end{equation} 
	So from \eqref{claim 1} it follows $p|p'$ and $q|q'$.\\
	
	\textbf{Claim 3:} $p=p'$ and $q=q'$. As a consequence, we have 
	\begin{equation}\label{alpha_i,j nonzero iff}
	(i,j)\in\Gamma \text{ if and only if } p|i \text{ and } q|j.
	\end{equation}
	
	To prove this, note first that for any $p|i$, there exists $j\in\mathbb{Z}_k$ such that $(i,j)\in\Gamma$. Otherwise, $(i,j)\notin\Gamma$ for all $j\in\mathbb{Z}_k$. Since $(p,j_p)\notin\Gamma$ for some $j_p\in\mathbb{Z}_k$, we have by \eqref{Obeservation 1 bis} that $(i-p,j)\notin\Gamma$ for all $j\in\mathbb{Z}_k$. This argument gives finally $(0,j)\notin\Gamma$ for all $j\in\mathbb{Z}_k$, which contradicts with the fact that $(0,0)\in\Gamma$. Similarly, for any $q|j$, there exists $i\in\mathbb{Z}_k$ such that $(i,j)\in\Gamma$. This allows us to define for all $p|i$ and $q|j$
	\[
	p_j:=\min \{i>0:(i,j)\in\Gamma\},
	\]
	\[
	q_i:=\min \{j>0:(i,j)\in\Gamma\}.
	\]
	Then \eqref{claim 1} implies $p|p_j$ and  $q|q_i$ for all such $i,j$. Following a similar argument of showing \eqref{claim 1}, we have by \eqref{Obeservation 1} and Lemma \ref{Bezout's identity} that $p_j|p'$ and $q_i|q'$. Moreover, note that 
	\[
	(0,j)\in\Gamma \text{ iff } q'|j,
	\text{ and }
	(i,0)\in\Gamma \text{ iff } p'|i,
	\] 
	which is a consequence of definitions of $p',q'$ and \eqref{Obeservation 1} together with B\'ezout's identity. Then \eqref{Obeservation 1} tells us that for any $p|i$
	\begin{equation}
	(i,q_i+mq')\in\Gamma,~~m\geq 0;
	\end{equation}
	and \eqref{Obeservation 1 bis} tells us that for any $p|i$
	\begin{equation}
	(i,j-mq_i)=(0-(-i),j-mq_i)\notin\Gamma,~~q'\nmid j,m\geq 0.
	\end{equation}
	So we have for any $p|i$, 
	\[
	(i,j)\in\Gamma \text{ iff } j\equiv q_i\mod q'. 
	\]   
	Now we are ready to prove \textbf{Claim 3}. Suppose $q\neq q'$, then for any $p|i$, we have $(i,j(i))\notin\Gamma$ for some $q|j(i)$ (for example, take $j(i)=q_i+q$). So $\alpha_{i,j(i)}=0$, and thus from the non-negativity of $\alpha_{m,n}$ and \eqref{Hadamard product} we have 
	\begin{equation*}
	\sum_{r,s\in \mathbb{Z}_k}\eta^{i(r-s)}\kappa_{r,s}\kappa_{r+j(i),s+j(i)}=0.
	\end{equation*}
	By \eqref{relation2 of kappa_rs: zero if}, it becomes
	\begin{equation*}
	\sum_{r,s\in \mathbb{Z}_k}\eta^{i(r-s)}\kappa_{r,s}\kappa_{r+j(i),s+j(i)}
	=\sum_{r,s\equiv l \mod q}\eta^{i(r-s)}\kappa_{r,s}\kappa_{r+j(i),s+j(i)}
	=0.
	\end{equation*}
	Fix $p|i$ and the associated $j(i)$, then we have for any $q|t$ 
	\begin{equation}\label{Observation: independent of r,s}
	\kappa_{r,s}\kappa_{r+j(i),s+j(i)}=d_i \kappa_{t+l,l}^2,\text{ for all } r,s\equiv l \mod q \text{ and } r-s=t,
	\end{equation}
	where $d_i\in\{\pm1\}$ is independent of $r,s$. In fact, this is trivial when $\max_{r,s:r-s=t}|\kappa_{rs}|=0$. Set $d_i\equiv 1$ for example. If $\max_{r,s:r-s=t}|\kappa_{rs}|>0$, recall that we have for such $r,s$
	\[
	\kappa_{r,s}=\eta^{p_q(s-r)}\kappa_{r-q,s-q}=\eta^{-p_q t}\kappa_{r-q,s-q}~~\text{and}~~\eta^{2p_q (r-s)}=\eta^{2p_q t}=1.
	\]
	Clearly, $\eta^{p_q t}=\pm1$. So \eqref{Observation: independent of r,s} is also trivial if $\eta^{p_q t}=1$: we can choose $d_i=1$. If $\eta^{p_q t}=-1$, then for $r,s\equiv l \mod q$
	\[
	\kappa_{r,s}\kappa_{r+j(i),s+j(i)}=(-1)^{j(i)/q}\kappa^2_{r-s+l,l},
	\]
	and we can choose $d_i=(-1)^{j(i)/q}$. Thus \eqref{Observation: independent of r,s} holds.\\
	
	Now we have for any $p|i$
	\begin{align*}
	0&=\sum_{r,s\equiv l \mod q}\eta^{i(r-s)}\kappa_{r,s}\kappa_{r+j(i),s+j(i)}\\
	&=\sum_{r,s=tq+l,t\in\mathbb{Z}_{k/q}}\eta^{i(r-s)}d_i \kappa^2_{r-s+l,l}\\
	&=\frac{k}{q}\sum_{t\in \mathbb{Z}_{k/q}}\eta^{itq}d_i \kappa^2_{tq+l,l}.
	\end{align*}
	This clearly holds for any $p\nmid i$ and any $d_i$ because of \eqref{claim 1} and \eqref{relation2 of kappa_rs: zero if}. Set $d_i:=1$ for all $p\nmid i$. Thus we have for each $i\in\mathbb{Z}_{k}$ 
	\begin{equation*}
	\sum_{t\in \mathbb{Z}_{k/q}}d_i(\eta^{q})^{it} \kappa^2_{tq+l,l}
	=0.
	\end{equation*}
	Then the system of linear equations
	\begin{equation*}
	\sum_{t\in \mathbb{Z}_{k/q}}d_i(\eta^{q})^{it} \kappa^2_{tq+l,l}
	=0,~~ i=0,1,\dots,\frac{k}{q}-1.
	\end{equation*}
	can be represented as 
	\begin{equation}\label{Matrix equation}
	DVX=0,
	\end{equation}
	where $D=\text{diag}(d_0,d_1\dots,d_{\frac{k}{q}-1})$ is an invertible $\frac{k}{q}\times\frac{k}{q}$ diagonal matrix, $V=V(1,\eta^q,\dots,\eta^{k-q})$ is an $\frac{k}{q}\times\frac{k}{q}$ Vandermonde matrix, and $X=(\kappa^2_{l,l},\kappa^2_{q+l,l}\dots,\kappa^2_{k-q+l,l})^{T}$ is a $\frac{k}{q}$ dimensional vector. Here $V(a_1,a_2,\dots,a_n)$ denotes the $n\times n$ Vandermonde matrix $[a_i^{j-1}]_{i,j=1}^{n}$. By the definition of $\eta$, $V$ is invertible, so we have $X=0$. But $\kappa^2_{l,l}=\frac{q^2}{4k^2}\neq 0$, which leads to a contradiction! Hence $q=q'$.
	
	Now $p=p'$ follows directly. Indeed, $q=q'$ implies $q_p=q$, so we have $(p,q)=(p,q_p)\in\Gamma$. Thus $(p,0)=(p,q)+(0,-q)\in\Gamma$. Hence $p=p'$, which ends the proof of \textbf{Claim 3}.\\
	
	\textbf{Claim 4:} We have either $p=1$ or $pq=k$ for $p\geq 2$.\\
	
	We will use a similar argument as above. Before this, let us update several conclusions, following \textbf{Claim 3}. Note first that $\kappa_{r,s}=\kappa_{r-s+l,l}$ for all $r,s\equiv l \mod q$, since $(0,q)\in\Gamma$. Moreover, $\kappa_{r,s}\neq 0$ only if $\eta^{p(r-s)}=1$.
	
	If $p>1$, we have for any $p\nmid i$:
	\begin{align*}
	0&=\sum_{r,s\in\mathbb{Z}_k}\eta^{i(r-s)}\kappa_{r,s}\kappa_{r+j,s+j}\\
	&=\sum_{r,s\equiv l \mod q}\eta^{i(r-s)}\kappa_{r,s}\kappa_{r+j,s+j}\\
	&=\sum_{r,s=tq+l,t\in\mathbb{Z}_{k/q}}\eta^{i(r-s)}\kappa^2_{r-s+l,l}\\
	&=\frac{k}{q}\sum_{t\in \mathbb{Z}_{k/q}}\eta^{itq}\kappa^2_{tq+l,l}.
	\end{align*}
	So we have the following system of linear equations:
	\begin{equation*}
	\sum_{t\in \mathbb{Z}_{k/q}}(\eta^{q})^{it} \kappa^2_{tq+l,l}
	=0,~~i\in\{0,1,2,\dots,\frac{k}{q}-1\}\setminus p\mathbb{Z}_k,
	\end{equation*}
	which can be represented as
	\begin{equation*}
	V'X'=0,
	\end{equation*}
	where $V'$ is a $m\times \frac{k}{q}$ submatrix of $V$, the Vandermonde matrix introduced earlier, with $m=\frac{k}{q}-[\frac{k}{pq}]$ and $X'=X=(\kappa^2_{l,l},\kappa^2_{q+l,l}\dots,\kappa^2_{k-q+l,l})^{T}$ with $n$ nonzero entries. Since $\kappa^2_{r-s+l,l}\neq 0$ only if $q|r-s$ and $\frac{k}{p}|r-s$ (because $\eta^{p(r-s)}=1$), we have $n\leq \frac{k}{\text{lcm}(\frac{k}{p},q)}=\frac{p}{q}\gcd(\frac{k}{p},q)$, where lcm$(a,b)$ denotes the least common multiple of $a$ and $b$. The fact that $\kappa^2_{l,l}=\frac{q^2}{4k^2}\neq 0$ requires $m<n$.\\
	
	If $\gcd(\frac{k}{p},q)<\frac{k}{p}$, then $\gcd(\frac{k}{p},q)\leq\frac{k}{2p}$ and for $p\geq 2$,
	\begin{equation*}
	m=\frac{k}{q}-[\frac{k}{pq}]
	\geq\frac{k}{q}-\frac{k}{pq}
	\geq \frac{k}{2q}
	=\frac{p}{q}\cdot \frac{k}{2p}
	\geq \frac{p}{q}\gcd(\frac{k}{p},q)
	\geq n,
	\end{equation*}
	which leads to a contradiction. So $\gcd(\frac{k}{p},q)=\frac{k}{p}$, i.e., $k|pq$. Thus $\frac{k}{q}\geq n>m=\frac{k}{q}-[\frac{k}{pq}]\geq \frac{k}{q}-1$. This happens only if $k=pq$, which ends the proof of the \textbf{Claim 4}.\\
	
	Now we are ready to finish the proof of the theorem.
	
	(i) Suppose $p=1$. In this case $\Gamma=\{(i,j)\in\mathbb{Z}_k\times\mathbb{Z}_k:q|j\}$, and  
	\begin{equation*}
	\kappa_{rs}= \begin{cases}
	\frac{1}{2p} & r=s\equiv l \mod{q}\\
	0 & \text{otherwise}.\
	\end{cases}
	\end{equation*}
	Hence \eqref{Equation A} becomes
	\begin{equation*}
	\alpha_{i,j}=\sum_{(r,s)\in\Gamma}\alpha_{i-r,j-s}\alpha_{r,s}+\frac{q}{4k^2},~~ q|j.
	\end{equation*}
	Let $M:=\max\{\alpha_{i,j}:(i,j)\in\Gamma\}$ and suppose that $\alpha_{i_0,j_0}=M$. Then 
	\begin{equation*}
	M=\alpha_{i_0,j_0}=\sum_{(r,s)\in\Gamma}\alpha_{i_0-r,j_0-s}\alpha_{r,s}+\frac{q}{4k^2}\leq M\sum_{(r,s)\in\Gamma}\alpha_{r,s}+\frac{q}{4k^2}=\frac{M}{2}+\frac{q}{4k^2},
	\end{equation*}
	which implies $M\leq\frac{q}{2k^2}$. Moreover, $M\geq\frac{1}{2\sharp\Gamma}=\frac{q}{2k^2}$. So $M=\frac{q}{2k^2}$, and thus 
	\begin{equation*}
	\alpha_{i,j}=\frac{q}{2k^2},~~ (i,j)\in\Gamma.
	\end{equation*}
	If $q=1$, we have $\mu=h_{\mathcal{A}_k}$, which is nothing but (1). Otherwise, we obtain (2).\\
	
	(ii) Suppose $p>1$ and $k=pq$. In this case $n=\frac{k}{q}=p,m=\frac{k}{q}-[\frac{k}{pq}]=p-1$. Since $\kappa^2_{l,l}=\frac{1}{4p^2}$, the equation $V'X'=0$ possesses exactly one solution: $X'=(\frac{1}{4p^2},\dots,\frac{1}{4p^2})^T$. That is to say,
	\begin{equation*}
	\kappa_{rs}= \begin{cases}
	\kappa_{0,0}=\frac{1}{2p} & r=s,r,s\equiv l \mod{q}\\
	\kappa_{0,s-r}=\pm\frac{1}{2p} & r\neq s,r,s\equiv l \mod{q}\\
	0 & \text{otherwise}.
	\end{cases}
	\end{equation*}
	So the fact that $K\geq 0$ is equivalent to the positive semi-definiteness of the circulant matrix $[\kappa_{r,s}]_{r,s\equiv l\mod{q}}$. Since the set of all the eigenvalues of $[\kappa_{r,s}]_{r,s\equiv l\mod{q}}$ is
	\[
	\{\sum_{j\in q\mathbb{Z}_k}\kappa_{0,j}\eta^{ij},i\in\mathbb{Z}_{k/q}\},
	\]
	we have $K\geq 0$ if and only if 
	\begin{equation*}
	\sum_{j\in q\mathbb{Z}_k}\tau_j\eta^{ij}\geq0,~~ i\in\mathbb{Z}_{k/q},
	\end{equation*}
	where $\tau_j:=\kappa_{0,j}\in\{\pm1\},j\in q\mathbb{Z}_k$.\\
	
	Now \eqref{Equation A} is equivalent to 
	\begin{equation*}
	\alpha_{i,j}=\sum_{(m,n)\in\Gamma}\alpha_{i-m,j-n}\alpha_{m,n}+\frac{1}{4k},~~ (i,j)\in\Gamma.
	\end{equation*} 
	Following a similar argument as above, we have 
	\begin{equation*}
	\alpha_{i,j}=\frac{1}{2k},~~ (i,j)\in\Gamma.
	\end{equation*} 
	This gives (3), and the proof of the theorem is complete.	
\end{proof}

\begin{rem}\label{remark of prop2}
	Let $\mu$ be as above. Then 
	\begin{enumerate}
		\item $\mu=h_{\Gamma,l}$ if and only if
		\begin{equation*}
		\mu(\rho_{i,j})=
		\begin{cases*}
		\frac{1}{2}& $i=0,\frac{k}{q}|j$\\
		0& \text{otherwise}
		\end{cases*}
		\text{ and }
		\mu(\sigma_{i,j})=
		\begin{cases*}
		\frac{1}{2}\eta^{jl}& $i=0,\frac{k}{q}|j$\\
		0& \text{otherwise}
		\end{cases*},
		\end{equation*}
		if and only if 
		\begin{equation*}
		\hat{\mu}(\pi_{i,j})=
		\begin{cases*}
		\begin{psmallmatrix*}
		\frac{1}{2}&\frac{1}{2}\eta^{-jl}\\
		\frac{1}{2}\eta^{jl}&\frac{1}{2}
		\end{psmallmatrix*}
		& $i=0,\frac{k}{q}|j$\\
		\begin{psmallmatrix*}
		0&0\\
		0&0
		\end{psmallmatrix*}
		& \text{otherwise}
		\end{cases*}
		\end{equation*}
		\item $\mu=h_{\Gamma,l,\tau}$ if and only if
		\begin{equation*}
		\mu(\rho_{i,j})=
		\begin{cases*}
		\frac{1}{2}& $q|i,p|j$\\
		0& \text{otherwise}
		\end{cases*}
		\text{ and }
		\mu(\sigma_{i,j})=
		\begin{cases*}
		\frac{1}{2}\tau_j\eta^{jl}& $q|i,p|j$\\
		0& \text{otherwise}
		\end{cases*},
		\end{equation*}
		if and only if 
		\begin{equation*}
		\hat{\mu}(\pi_{i,j})=
		\begin{cases*}
		\begin{psmallmatrix*}
		\frac{1}{2}&\frac{1}{2}\tau_{-j}\eta^{-jl}\\
		\frac{1}{2}\tau_j\eta^{jl}&\frac{1}{2}
		\end{psmallmatrix*}
		& $q|i,p|j$\\
		\begin{psmallmatrix*}
		0&0\\
		0&0
		\end{psmallmatrix*}
		& \text{otherwise}
		\end{cases*}
		\end{equation*}
	\end{enumerate}
\end{rem}    
Denote by $\mathcal{I}_1(\mathcal{A}_k)$, $\mathcal{I}_2(\mathcal{A}_k)$ and $\mathcal{I}_3(\mathcal{A}_k)$ the family of idempotent states of the forms $h_{\Gamma}$, $h_{\Gamma,l}$ and $h_{\Gamma,l,\tau}$ respectively. Then the discussions above can be rephrased as    
\begin{thm}
	Fix $k\geq2$ an integer. Then the family of idempotent states $\text{Idem}(\mathcal{A}_k)$ on Sekine quantum group $\mathcal{A}_k$ is given through 
	\[
	\text{Idem}(\mathcal{A}_k)=\{h_{\mathcal{A}_k}\}\cup\mathcal{I}_1(\mathcal{A}_k)\cup\mathcal{I}_2(\mathcal{A}_k)\cup\mathcal{I}_3(\mathcal{A}_k).
	\]
\end{thm}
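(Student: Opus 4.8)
The plan is to assemble the final set equality directly from the two preceding structural results, using the elementary dichotomy already recorded in \eqref{Equation B bis} as the organizing principle. First I would recall that by Lemma \ref{characterization of idempotent states on Sekine quantum groups} a functional $\mu=\sum_{i,j}\alpha_{i,j}\widetilde{d}_{i,j}+\sum_{r,s}\kappa_{r,s}\widetilde{e}_{r,s}$ lies in $\text{Idem}(\mathcal{A}_k)$ precisely when $\alpha_{i,j}\ge0$, $K=[\kappa_{r,s}]_{r,s\in\mathbb{Z}_k}$ is positive semidefinite, and the system \eqref{Equation A}--\eqref{Equation C} holds. This converts the entire membership question into the solvability of that system, so nothing beyond those equations needs to be examined.

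Next I would exploit the trace identity: specializing \eqref{Equation B} to $r=s$ and summing over $r$ produces \eqref{Equation B bis}, which together with the normalization \eqref{Equation C} forces $\sum_{i,j}\alpha_{i,j}\in\{1,\tfrac12\}$ and hence exactly one of the two mutually exclusive regimes \eqref{Case 1:Tr(K)=0} and \eqref{Case 2:Tr(K)=1/2}. This dichotomy is the backbone of the argument: every idempotent state falls into precisely one of the two cases, separated by the value of $\operatorname{Tr}(K)=\sum_r\kappa_{r,r}$.

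Then I would invoke the two classifications in turn. Proposition \ref{Haar idempotent states} identifies the idempotent states satisfying \eqref{Case 1:Tr(K)=0} as exactly the states $h_\Gamma$ indexed by subgroups $\Gamma\le\mathbb{Z}_k\times\mathbb{Z}_k$, that is, the members of $\mathcal{I}_1(\mathcal{A}_k)$. Theorem \ref{thm:non-Haar} identifies the idempotent states satisfying \eqref{Case 2:Tr(K)=1/2} as exactly the Haar state $h_{\mathcal{A}_k}$, the states $h_{\Gamma,l}\in\mathcal{I}_2(\mathcal{A}_k)$, and the states $h_{\Gamma,l,\tau}\in\mathcal{I}_3(\mathcal{A}_k)$. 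Because both of these statements are ``if and only if'', each listed family is genuinely contained in $\text{Idem}(\mathcal{A}_k)$, so the two inclusions are obtained simultaneously and the asserted equality follows.

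There is no real obstacle remaining, since all the difficulty was already spent in proving Theorem \ref{thm:non-Haar}; the only points requiring care are bookkeeping ones. I would confirm that the split coming from \eqref{Equation B bis} is genuinely exhaustive — no idempotent state can have $\operatorname{Tr}(K)$ outside $\{0,\tfrac12\}$ — and that the parametric families $h_\Gamma$, $h_{\Gamma,l}$, $h_{\Gamma,l,\tau}$ correspond to the labels $\mathcal{I}_1,\mathcal{I}_2,\mathcal{I}_3$. If one additionally wants the union to be a disjoint decomposition, it suffices to separate the pieces by invariants of $K$: $\operatorname{Tr}(K)$ distinguishes $\mathcal{I}_1$ from the remaining three families, the Haar state is singled out by $K=\tfrac{1}{2k}I$ having full rank, $\mathcal{I}_2$ has $K$ diagonal but supported on the proper coset $r\equiv l\bmod q$ with $q>1$, and $\mathcal{I}_3$ has $K$ non-diagonal (its block of size $p\ge2$ carries nonzero off-diagonal entries $\pm\tfrac{1}{2p}$).
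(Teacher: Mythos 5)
Your proposal is correct and takes essentially the same approach as the paper, which states this theorem as a direct rephrasing of the trace dichotomy \eqref{Equation B bis}--\eqref{Case 2:Tr(K)=1/2} combined with Proposition \ref{Haar idempotent states} (the $\operatorname{Tr}(K)=0$ case) and Theorem \ref{thm:non-Haar} (the $\operatorname{Tr}(K)=\tfrac12$ case), both of which are ``if and only if'' statements. Your closing remarks on the disjointness of the four families, separated by invariants of $K$, is a small bookkeeping point the paper leaves implicit but is consistent with its claims.
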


\section{The order structure on $\text{Idem}(\mathcal{A}_k)$}
Franz and Skalski introduced in \cite{FSonidempotentstatesonquantumgroups} the order relation on the set of idempotent states of a finite quantum group. We recall this definition for Sekine quantum groups here.

\begin{defn}
	Let $\phi_1,\phi_2\in \text{Idem}(\mathcal{A}_k)$. Denote by $\prec$ the partial order on $\text{Idem}(\mathcal{A}_k)$ given through 
	\begin{equation*}
	\phi_1\prec \phi_2 \ \ \text{ if }\ \ \phi_1\star\phi_2=\phi_2.
	\end{equation*}
\end{defn}
In this order the Haar state $h_{\mathcal{A}_k}$ and the counit $\epsilon$ are, respectively, the biggest and smallest idempotent on $\text{Idem}(\mathcal{A}_k)$. Let $\mu,\nu\in\text{Idem}(\mathcal{A}_k)$. We use superscripts to label all the symbols which appeared before. For example, 
\begin{equation*}
\mu:=\sum_{i,j\in \mathbb{Z}_k}\alpha^{(\mu)}_{i,j}\widetilde{d}_{i,j}+\sum_{r,s\in\mathbb{Z}_k}\kappa^{(\mu)}_{r,s}\widetilde{e}_{r,s},~~ \nu:=\sum_{i,j\in \mathbb{Z}_k}\alpha^{(\nu)}_{i,j}\widetilde{d}_{i,j}+\sum_{r,s\in\mathbb{Z}_k}\kappa^{(\nu)}_{r,s}\widetilde{e}_{r,s}.
\end{equation*}
We introduce the partial order $\prec$ in the family of 2 by 2 idempotent matrices $\mathcal{J}:=\{A\in M_2(\mathbb{C}):A^2=A\}$:
\begin{equation*}
A\prec B \text{ if } AB=B,~~ A,B\in\mathcal{J},
\end{equation*}
and the partial order $\prec$ in the family of subgroups of $\mathbb{Z}_k\times\mathbb{Z}_k$:
\begin{equation*}
\Gamma\prec\Lambda \text{ if } \Gamma\subset\Lambda,\ \Lambda,~~ \Gamma\leq\mathbb{Z}_k\times\mathbb{Z}_k.
\end{equation*}
Then from \eqref{Fourier transform}, $\mu\prec\nu$ if and only if 
\begin{equation}\label{Observation of order}
\hat{\mu}(\pi_{i,j})\prec \hat{\nu}(\pi_{i,j}),~~ i,j\in\mathbb{Z}_k,
\end{equation}

Our main result in this section is the following theorem, characterizing the order structure in the lattice $(\text{Idem}(\mathcal{A}_k),\prec)$.

\begin{thm}\label{Order structure}
	Let $\mu,\nu$ be idempotent states, other than the Haar state, on $\mathcal{A}_k$ as above. Then $\mu\prec\nu$ if and only if one of the following holds:
	\begin{enumerate}
		\item [(1)] $\mu=h_{\Gamma^{(\mu)}},\nu=h_{\Gamma^{(\nu)}}\in\mathcal{I}_1(\mathcal{A}_k)$ and $\Gamma^{(\mu)}\prec\Gamma^{(\nu)}$;
		\item [(2)] $\mu=h_{\Gamma^{(\mu)}}\in\mathcal{I}_1(\mathcal{A}_k),\nu=h_{\Gamma^{(\nu)},l^{(\nu)}}\in\mathcal{I}_2(\mathcal{A}_k)$ and $\Gamma^{(\mu)}\prec\Gamma^{(\nu)}$;
		\item [(3)] $\mu=h_{\Gamma^{(\mu)}}\in\mathcal{I}_1(\mathcal{A}_k),\nu=h_{\Gamma^{(\nu)},l^{(\nu)},\tau^{(\nu)}}\in\mathcal{I}_3(\mathcal{A}_k)$ and $\Gamma^{(\mu)}\prec\Gamma^{(\nu)}$;
		\item [(4)] $\mu=h_{\Gamma^{(\mu)},l^{(\mu)}},\nu=h_{\Gamma^{(\nu)},l^{(\nu)}}\in\mathcal{I}_2(\mathcal{A}_k)$, $\Gamma^{(\mu)}\prec\Gamma^{(\nu)}$ and $l^{(\mu)}\equiv l^{(\nu)}\mod q^{(\nu)}$;
		\item [(5)] $\mu=h_{\Gamma^{(\mu)},l^{(\mu)},\tau^{(\mu)}}\in\mathcal{I}_3(\mathcal{A}_k),\nu=h_{\Gamma^{(\nu)},l^{(\nu)}}\in\mathcal{I}_2(\mathcal{A}_k)$, $\Gamma^{(\mu)}\prec\Gamma^{(\nu)}$ and 
		\begin{equation*}
		\tau^{(\nu)}_j=\eta^{j(l^{(\nu)}-l^{(\mu)})}\text{ for $j$ such that }\frac{k}{q^{(\nu)}}=p^{(\nu)}|j;
		\end{equation*}
		\item [(6)] $\mu=h_{\Gamma^{(\mu)},l^{(\mu)},\tau^{(\mu)}},\nu=h_{\Gamma^{(\nu)},l^{(\nu)},\tau^{(\nu)}}\in\mathcal{I}_3(\mathcal{A}_k)$, $\Gamma^{(\mu)}=\Gamma^{(\nu)}$ and 
		\begin{equation*}
		\tau^{(\nu)}_j=\tau^{(\mu)}_j\eta^{j(l^{(\nu)}-l^{(\mu)})}\text{ for $j$ such that } p^{(\mu)}=p^{(\nu)}|j.
		\end{equation*}
	\end{enumerate}
\end{thm}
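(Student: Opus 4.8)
The plan is to run everything through the pointwise criterion \eqref{Observation of order}: $\mu\prec\nu$ holds if and only if $\hat\mu(\pi_{i,j})\prec\hat\nu(\pi_{i,j})$ for every $i,j\in\mathbb{Z}_k$, so the proof reduces to comparing, index by index, the Fourier matrices recorded in Remarks \ref{remark of prop1} and \ref{remark of prop2}. By those remarks the only matrices occurring are, for $\mathcal{I}_1$, the four $\{0,1\}$-diagonal idempotents, and for $\mathcal{I}_2,\mathcal{I}_3$ the zero matrix together with matrices of the shape $P_c:=\tfrac12\left(\begin{smallmatrix}1 & \bar c\\ c & 1\end{smallmatrix}\right)$ with $|c|=1$. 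The first point I would record is that each $P_c$ is a rank-one \emph{orthogonal} projection (it is self-adjoint and squares to itself); this is the observation that makes the whole comparison tractable. I would then prove a short matrix lemma for $\prec$ on these idempotents: one always has $A\prec 0$, and $A\not\prec B$ when $A=0\ne B$; moreover (i) for two $\{0,1\}$-diagonal matrices $A\prec B$ iff $B\le A$ entrywise; (ii) for two rank-one orthogonal projections $AB=B$ forces (taking adjoints) $BA=B$, hence $\operatorname{ran}B\subseteq\operatorname{ran}A$ and, both ranges being one-dimensional, $A=B$; (iii) for a $\{0,1\}$-diagonal $A$ and a $P_c$, a direct check gives $A\prec P_c$ iff $A=I$; and (iv) a $P_c$ is never $\prec$ a nonzero diagonal idempotent. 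These facts, with \eqref{Observation of order}, are all that is needed.

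Next I would compute the Fourier supports as annihilators. For $h_\Gamma\in\mathcal I_1$ one has $\hat{h_\Gamma}(\pi_{i,j})=\operatorname{diag}\!\left(\mathbf 1_{(i,j)\in\Gamma^\perp},\,\mathbf 1_{(i,-j)\in\Gamma^\perp}\right)$, where $\Gamma^\perp\le\mathbb{Z}_k\times\mathbb{Z}_k$ is the annihilator; for $\Gamma=\mathbb{Z}_k\times q\mathbb{Z}_k$ this is $\{0\}\times\tfrac kq\mathbb{Z}_k$, and for $\Gamma=p\mathbb{Z}_k\times q\mathbb{Z}_k$ with $pq=k$ it is $q\mathbb{Z}_k\times p\mathbb{Z}_k$, so in both $\mathcal I_2$ and $\mathcal I_3$ the set where $\hat\mu\ne0$ is exactly $\Gamma^\perp$, agreeing with Remark \ref{remark of prop2}. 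With the supports in hand three of the nine type-combinations are excluded at once: at $\pi_{0,0}$ one has $\hat\nu=I$ for $\nu\in\mathcal I_1$ but $\hat\mu(\pi_{0,0})=\tfrac12\left(\begin{smallmatrix}1&1\\1&1\end{smallmatrix}\right)\ne I$ for $\mu\in\mathcal I_2\cup\mathcal I_3$, ruling out $\mathcal I_2\prec\mathcal I_1$ and $\mathcal I_3\prec\mathcal I_1$; and since the support of any $\nu\in\mathcal I_3$ contains points with nonzero first coordinate (as $q^{(\nu)}\mathbb{Z}_k\ne\{0\}$ when $p^{(\nu)}>1$) whereas every $\mu\in\mathcal I_2$ is supported on $i=0$, at such a point $\hat\mu=0\ne\hat\nu$, which rules out $\mathcal I_2\prec\mathcal I_3$.

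For the six surviving combinations I would argue uniformly. The criterion splits into a support condition (wherever $\hat\nu\ne0$ one needs $\hat\mu\ne0$, since $A=0\not\prec B\ne0$) and an equality condition on the common support. Read through the annihilators above and Pontryagin duality $\Lambda^\perp\subseteq\Gamma^\perp\Leftrightarrow\Gamma\subseteq\Lambda$, the support condition becomes exactly $\Gamma^{(\mu)}\prec\Gamma^{(\nu)}$ in cases (1)--(5), while in case (6) the constraints $p^{(\mu)}q^{(\mu)}=k=p^{(\nu)}q^{(\nu)}$ force the two supports to be equal, i.e. $\Gamma^{(\mu)}=\Gamma^{(\nu)}$. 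In cases (1)--(3), where $\hat\mu$ is diagonal, facts (i) and (iii) show this containment is the whole story. In cases (4)--(6), where both Fourier matrices are of type $P_c$ on the overlap, fact (ii) additionally forces the two projections to coincide there; comparing the off-diagonal phases $\tau_j\eta^{jl}$ then yields the congruence $l^{(\mu)}\equiv l^{(\nu)}\bmod q^{(\nu)}$ in case (4) and the stated relations between the $\tau$'s in cases (5)--(6). Here I would note the small point that on the relevant indices $\eta^{j(l^{(\nu)}-l^{(\mu)})}$ must equal a ratio of the real numbers $\tau_j\in\{\pm1\}$, hence is itself $\pm1$, so the sign in the exponent is immaterial and the two-sided form of the condition is unambiguous.

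I expect the main obstacle to be precisely this last step's bookkeeping: matching the supports $\Gamma^\perp$ across the three families and disentangling the equality-of-projections condition into the exact congruences for $l$ and the exact relations for $\tau$, while keeping track of which indices $j$ actually occur in the overlap (so that the $\tau$-relation is imposed on the multiples of $p^{(\nu)}=\tfrac{k}{q^{(\nu)}}$, neither more nor fewer). The conceptual content, by contrast, is carried entirely by the single observation that the relevant rank-one idempotents are orthogonal projections, which collapses ``$\prec$'' to ``$=$'' on the two-dimensional blocks.
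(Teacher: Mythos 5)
Your proposal is correct and takes essentially the same route as the paper: both reduce to the pointwise Fourier criterion \eqref{Observation of order}, use Remarks \ref{remark of prop1} and \ref{remark of prop2} to rule out the three impossible type combinations, and then compare the matrices case by case on the common support, recovering the six listed cases. Your departures are organizational rather than substantive---the rank-one orthogonal-projection lemma (collapsing ``$\prec$'' to equality on the two-dimensional blocks) and the annihilator/duality bookkeeping replace the paper's explicit $2\times2$ multiplications and convolution-coefficient equations---and your version in fact irons out two slips in the paper's write-up: the excluded combination is $\mu\in\mathcal{I}_2(\mathcal{A}_k)$, $\nu\in\mathcal{I}_3(\mathcal{A}_k)$ (not the reverse, which is case (5)), and the $\tau^{(\nu)}$ written in case (5) should be the $\tau^{(\mu)}$ of the $\mathcal{I}_3$-state $\mu$, the sign of the exponent being immaterial exactly as you observe.
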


\begin{proof}
	By \eqref{Observation of order}, if $\hat{\mu}(\pi_{i,j})\prec \hat{\nu}(\pi_{i,j})$, then $\hat{\mu}(\pi_{i,j})=0$ implies $\hat{\nu}(\pi_{i,j})=0$, and $\hat{\nu}(\pi_{i,j})=I$ implies $\hat{\mu}(\pi_{i,j})=I$. That is to say,
	\begin{equation}\label{Observation of order structure:0}
	\{(i,j)\in\mathbb{Z}_k\times\mathbb{Z}_k:\hat{\mu}(\pi_{i,j})=0\}\subset\{(i,j)\in\mathbb{Z}_k\times\mathbb{Z}_k:\hat{\nu}(\pi_{i,j})=0\},
	\end{equation}
	and 
	\begin{equation}\label{Observation of order structure:I}
	\{(i,j)\in\mathbb{Z}_k\times\mathbb{Z}_k:\hat{\nu}(\pi_{i,j})=I\}\subset\{(i,j)\in\mathbb{Z}_k\times\mathbb{Z}_k:\hat{\mu}(\pi_{i,j})=I\}.
	\end{equation}
	So from Remark \ref{remark of prop1} and Remark \ref{remark of prop2} it follows that
	\begin{enumerate}
		\item if $\mu\in\mathcal{I}_3(\mathcal{A}_k)$, then $\nu\notin\mathcal{I}_2(\mathcal{A}_k)$;
		\item if $\nu\in\mathcal{I}_1(\mathcal{A}_k)$, then $\mu\in\mathcal{I}_1(\mathcal{A}_k)$.
	\end{enumerate} 
	Hence we have only the following six cases:
	\begin{enumerate}
		\item [(1)] $\mu=h_{\Gamma^{(\mu)}},\nu=h_{\Gamma^{(\nu)}}\in\mathcal{I}_1(\mathcal{A}_k)$. Then $\mu\prec\nu$ if and only if 
		\begin{equation*}
		\alpha^{(\nu)}_{i,j}=\sum_{r,s\in \mathbb{Z}_k}\alpha^{(\mu)}_{i-r,j-s}\alpha^{(\nu)}_{r,s},~~ i,j\in \mathbb{Z}_k.
		\end{equation*}
		For $(i,j)\notin\Gamma^{(\nu)}$, we have $\alpha^{(\mu)}_{i-r,j-s}\alpha^{(\nu)}_{r,s}=0,\forall r,s\in\mathbb{Z}_k$. Choosing $(r,s)=(0,0)$, we have $\alpha^{(\mu)}_{i,j}=0$, i.e., $(i,j)\notin\Gamma^{(\mu)}$. So $\Gamma^{(\mu)}\prec\Gamma^{(\nu)}$.
		
		\item [(2)] $\mu=h_{\Gamma^{(\mu)}}\in\mathcal{I}_1(\mathcal{A}_k),\nu=h_{\Gamma^{(\nu)},l^{(\nu)}}\in\mathcal{I}_2(\mathcal{A}_k)$. Then $\mu\prec\nu$ if and only if 
		\begin{equation}\label{I_1<I_2 1}
		\alpha^{(\nu)}_{i,j}=\sum_{r,s\in \mathbb{Z}_k}\alpha^{(\mu)}_{i-r,j-s}\alpha^{(\nu)}_{r,s},~~ i,j\in \mathbb{Z}_k,
		\end{equation}
		and 
		\begin{equation}\label{I_1<I_2 2}
		\kappa^{(\nu)}_{r,r}=\sum_{i,j\in \mathbb{Z}_k}\alpha^{(\mu)}_{i,j}\kappa^{(\nu)}_{r+j,r+j},~~ r,s\in\mathbb{Z}_k.
		\end{equation}
		From a similar argument in (1), we have that \eqref{I_1<I_2 1} implies $\Gamma^{(\mu)}\prec\Gamma^{(\nu)}$. So $\Gamma^{(\mu)}=\mathbb{Z}_k\times q^{(\mu)}\mathbb{Z}_k$ with $q^{(\nu)}|q^{(\mu)}$. In this case \eqref{I_1<I_2 2} always holds.
		
		\item [(3)]
		$\mu=h_{\Gamma^{(\mu)}}\in\mathcal{I}_1(\mathcal{A}_k),\nu=h_{\Gamma^{(\nu)},l^{(\nu)},\tau^{(\nu)}}$. Then $\mu\prec\nu$ if and only if 
		\begin{equation}\label{I_1<I_3 1}
		\alpha^{(\nu)}_{i,j}=\sum_{r,s\in \mathbb{Z}_k}\alpha^{(\mu)}_{i-r,j-s}\alpha^{(\nu)}_{r,s},~~ i,j\in \mathbb{Z}_k,
		\end{equation}
		and 
		\begin{equation}\label{I_1<I_3 2}
		\kappa^{(\nu)}_{r,s}=\sum_{i,j\in \mathbb{Z}_k}\eta^{i(s-r)}\alpha^{(\mu)}_{i,j}\kappa^{(\nu)}_{r+j,s+j},~~ r,s\in\mathbb{Z}_k.
		\end{equation}
		From a similar argument in (2), we have $\Gamma^{(\mu)}=p^{(\mu)}\mathbb{Z}_k\times q^{(\mu)}\mathbb{Z}_k\prec \Gamma^{(\mu)}=p^{(\nu)}\mathbb{Z}_k\times q^{(\nu)}\mathbb{Z}_k$. Or equivalently, $p^{(\nu)}|p^{(\mu)}$ and $q^{(\nu)}|q^{(\mu)}$. In this case \eqref{I_1<I_3 2} always holds. 
		
		\item [(4)] $\mu=h_{\Gamma^{(\mu)},l^{(\mu)}},\nu=h_{\Gamma^{(\nu)},l^{(\nu)}}\in\mathcal{I}_2(\mathcal{A}_k)$. From Remark \ref{remark of prop2} and \eqref{Observation of order structure:0} it follows that $q^{(\nu)}|q^{(\mu)}$. So $\Gamma^{(\mu)}\prec\Gamma^{(\nu)}$. Moreover, $\hat{\mu}(\pi_{i,j})\prec \hat{\nu}(\pi_{i,j})$ requires that for all $\frac{k}{q^{(\nu)}}|j$
		\begin{equation*}
		\begin{psmallmatrix*}
		\frac{1}{2}&\frac{1}{2}\eta^{-jl^{(\nu)}}\\
		\frac{1}{2}\eta^{jl^{(\nu)}}&\frac{1}{2}
		\end{psmallmatrix*}
		=\begin{psmallmatrix*}
		\frac{1}{2}&\frac{1}{2}\eta^{-jl^{(\mu)}}\\
		\frac{1}{2}\eta^{jl^{(\mu)}}&\frac{1}{2}
		\end{psmallmatrix*}
		\begin{psmallmatrix*}
		\frac{1}{2}&\frac{1}{2}\eta^{-jl^{(\nu)}}\\
		\frac{1}{2}\eta^{jl^{(\nu)}}&\frac{1}{2}
		\end{psmallmatrix*},
		\end{equation*}
		which is equivalent to 
		\begin{equation*}
		\begin{psmallmatrix*}
		\frac{1}{2}&\frac{1}{2}\eta^{-jl^{(\nu)}}\\
		\frac{1}{2}\eta^{jl^{(\nu)}}&\frac{1}{2}
		\end{psmallmatrix*}
		=\begin{psmallmatrix*}
		\frac{1}{4}+\frac{1}{4}\eta^{j(l^{(\nu)}-l^{(\mu)})}&\frac{1}{4}\eta^{-jl^{(\nu)}}(1+\eta^{j(l^{(\nu)}-l^{(\mu)})})\\
		\frac{1}{4}\eta^{jl^{(\nu)}}(1+\eta^{j(l^{(\mu)}-l^{(\nu)})})&\frac{1}{4}+\frac{1}{4}\eta^{j(l^{(\mu)}-l^{(\nu)})}
		\end{psmallmatrix*}.
		\end{equation*}
		That is to say,
		\begin{equation*}
		\eta^{j(l^{(\nu)}-l^{(\mu)})}=1 \text{ for all }\frac{k}{q^{(\nu)}}|j.
		\end{equation*}
		So $l^{(\mu)}\equiv l^{(\nu)}\mod q^{(\nu)}$.
		
		\item [(5)]$\mu=h_{\Gamma^{(\mu)},l^{(\mu)},\tau^{(\mu)}}\in\mathcal{I}_3(\mathcal{A}_k),\nu=h_{\Gamma^{(\nu)},l^{(\nu)}}\in\mathcal{I}_2(\mathcal{A}_k)$. Following a similar argument as above, it follows that $\mu\prec\nu$ if and only if $q^{(\nu)}|q^{(\mu)}$ and
		\begin{equation*}
		\begin{psmallmatrix*}
		\frac{1}{2}&\frac{1}{2}\tau^{(\nu)}_j\eta^{-jl^{(\nu)}}\\
		\frac{1}{2}\tau^{(\nu)}_j\eta^{jl^{(\nu)}}&\frac{1}{2}
		\end{psmallmatrix*}
		=\begin{psmallmatrix*}
		\frac{1}{2}&\frac{1}{2}\eta^{-jl^{(\mu)}}\\
		\frac{1}{2}\eta^{jl^{(\mu)}}&\frac{1}{2}
		\end{psmallmatrix*}
		\begin{psmallmatrix*}
		\frac{1}{2}&\frac{1}{2}\tau^{(\nu)}_j\eta^{-jl^{(\nu)}}\\
		\frac{1}{2}\tau^{(\nu)}_j\eta^{jl^{(\nu)}}&\frac{1}{2}
		\end{psmallmatrix*},
		\end{equation*}
		if and only if $\Gamma^{(\mu)}\prec\Gamma^{(\nu)}$ and 
		\begin{equation*}
		\tau^{(\nu)}_j=\eta^{j(l^{(\nu)}-l^{(\mu)})}\text{ for $j$ such that }\frac{k}{q^{(\nu)}}=p^{(\nu)}|j.
		\end{equation*}
		\item [(6)] $\mu=h_{\Gamma^{(\mu)},l^{(\mu)},\tau^{(\mu)}},\nu=h_{\Gamma^{(\nu)},l^{(\nu)},\tau^{(\nu)}}\in\mathcal{I}_3(\mathcal{A}_k)$. From Remark \ref{remark of prop2} and \eqref{Observation of order structure:0} it follows $p^{(\nu)}|p^{(\mu)}$ and $q^{(\nu)}|q^{(\mu)}$. Since $p^{(\mu)}q^{(\mu)}=p^{(\nu)}q^{(\nu)}=k$, we have  $p^{(\mu)}=p^{(\nu)},q^{(\mu)}=q^{(\nu)}$. Thus $\Gamma^{(\mu)}=\Gamma^{(\nu)}$. Moreover, 
		\begin{equation*}
		\begin{psmallmatrix*}
		\frac{1}{2}&\frac{1}{2}\tau^{(\nu)}_j\eta^{-jl^{(\nu)}}\\
		\frac{1}{2}\tau^{(\nu)}_j\eta^{jl^{(\nu)}}&\frac{1}{2}
		\end{psmallmatrix*}
		=\begin{psmallmatrix*}
		\frac{1}{2}&\frac{1}{2}\tau^{(\mu)}_j\eta^{-jl^{(\mu)}}\\
		\frac{1}{2}\tau^{(\mu)}_j\eta^{jl^{(\mu)}}&\frac{1}{2}
		\end{psmallmatrix*}
		\begin{psmallmatrix*}
		\frac{1}{2}&\frac{1}{2}\tau^{(\nu)}_j\eta^{-jl^{(\nu)}}\\
		\frac{1}{2}\tau^{(\nu)}_j\eta^{jl^{(\nu)}}&\frac{1}{2}
		\end{psmallmatrix*},
		\end{equation*}
		which is equivalent to
		\begin{equation*}
		\tau^{(\nu)}_j=\tau^{(\mu)}_j\eta^{j(l^{(\nu)}-l^{(\mu)})}\text{ for $j$ such that }p^{(\mu)}=p^{(\nu)}|j.
		\end{equation*}
	\end{enumerate}         
\end{proof}

We present here the order structure on $\text{Idem}(\mathcal{A}_k)$ for $k$ prime.

\begin{example}
    When $k$ is a prime number, $\mathbb{Z}_k\times\mathbb{Z}_k$ has one subgroup of order 1: $\Gamma_0=\{(0,0)\}$, $k+1$ subgroups of order $k$:
    \[
	\Gamma_+=\mathbb{Z}_k\times k\mathbb{Z}_k,~~
	\Gamma_-=k\mathbb{Z}_k\times\mathbb{Z}_k,~~
	\Gamma_i=\{j(1,i)=(j,ij):j\in\mathbb{Z}_k\},
	\]
	where $i=1,2,\cdots,k-1$, and 1 subgroup of order $k^2$: $\Gamma_{k}=\mathbb{Z}_k\times\mathbb{Z}_k$.
	
	Then Proposition \ref{Haar idempotent states} gives $k+3$ idempotent states: $h_+:=h_{\Gamma_+}$, $h_-:=h_{\Gamma_-}$, and $h_i:=h_{\Gamma_i},i=0,1,\cdots, k$, in which $h_0=\epsilon$ is the counit. The idempotent state in Theorem \ref{thm:non-Haar} (1) is the Haar state $h=h_{\mathcal{A}_k}$. By Theorem \ref{thm:non-Haar} (2), the Haar idempotent states of the form $h_{\Gamma,l}$ are $h_{+,l}:=h_{\Gamma_+,l}$ with $l\in\mathbb{Z}_k$. And the Theorem \ref{thm:non-Haar} (3) tells us that $h_{-,0,\tau}:=h_{\Gamma_-,0,\tau}$ are the only elements in $\mathcal{I}_3(\mathcal{A}_k)$, where $\tau$ verifies \eqref{a formula for K to be positive}. From Theorem \ref{Order structure} we can draw the Hasse diagram of the lattice $(\text{Idem}(\mathcal{A}_k),\prec)$ as:
	
	\begin{center}
		\begin{tikzpicture}
	\node (P0) at (0,0) {$h_0=\epsilon$};
	\node (P+) at (-2,-1) {$h_+$};
	\node (P-) at (2,-1) {$h_-$};
	\node (Pi) at (0,-1) {$h_i$};
	\node (Pk) at (0,-2) {$h_k$};
	\node (P+l) at (-4,-2) {$h_{+,l}$};
	\node (P-0) at (4,-2) {$h_{-,0,\tau}$};
	\node (P) at (0,-3) {$h=h_{\mathcal{A}_k}$};
	\draw
	(P0) edge[->] node[] {} (P+)
	(P0) edge[->] node[] {} (P-)
	(P0) edge[->] node[] {} (Pi)
	(P+) edge[->] node[] {} (P+l)
	(P+) edge[->] node[] {} (Pk)
	(P-) edge[->] node[] {} (P-0)
	(P-) edge[->] node[] {} (Pk)
	(Pi) edge[->] node[] {} (Pk)
	(P+l) edge[->] node[] {} (P)
	(P-0) edge[->] node[] {} (P)
	(Pk) edge[->] node[] {} (P);
	\end{tikzpicture}
	\end{center}
	where $i=1,2,\cdots,k-1$, $l\in\mathbb{Z}_k$, and $\tau$ satisfies \eqref{a formula for K to be positive}. When $k=2$, the Hasse diagram reads precisely as:
	
	\begin{center}
		\begin{tikzpicture}
	\node (P0) at (0,0) {$h_0=\epsilon$};
	\node (P+) at (-2,-1) {$h_+$};
	\node (P-) at (2,-1) {$h_-$};
	\node (P1) at (0,-1) {$h_1$};
	\node (P2) at (0,-2) {$h_2$};
	\node (P+0) at (-4,-2) {$h_{+,0}$};
	\node (P+1) at (-2,-2) {$h_{+,1}$};
	\node (P-0) at (2,-2) {$h_{-,0,\tau}$};
	\node (P-1) at (4,-2) {$h_{-,0,\tau'}$};
	\node (P) at (0,-3) {$h=h_{\mathcal{A}_2}$};
	\draw
	(P0) edge[->=angle 30] node[] {} (P1)
	(P0) edge[->] node[] {} (P-)
	(P0) edge[->] node[] {} (P+)
	(P+) edge[->] node[] {} (P+0)
	(P+) edge[->] node[] {} (P+1)
	(P+) edge[->] node[] {} (P2)
	(P-) edge[->] node[] {} (P-0)
	(P-) edge[->] node[] {} (P-1)
	(P-) edge[->] node[] {} (P2)
	(P1) edge[->] node[] {} (P2)
	(P+0) edge[->] node[] {} (P)
	(P+1) edge[->] node[] {} (P)
	(P-0) edge[->] node[] {} (P)
	(P-1) edge[->] node[] {} (P)
	(P2) edge[->] node[] {} (P);
	\end{tikzpicture}
	\end{center}
	Here $\tau$ is the trivial one that satisfies \eqref{a formula for K to be positive}, i.e., $\tau_0=\tau_1=1$; and $\tau'$ is given through $\tau'_0=1$ and $\tau'_1=-1$, as we have mentioned in Remark \ref{remark of tau}. 
	One should compare this diagram with the one in \cite{FGquantumindependentincrementprocess} of eight-dimension Kac-Paljutkin quantum group.
	
	Note that the Hasse diagram for $k=2$ coincides with that of the lattice of subgroups of Dihedral group $D_4$ (with the partial order reversed). Indeed, from discussions in Section 1, $\mathcal{A}_2$ has 8 one-dimensional representations and no 2-dimensional ones. Hence it is cocommutative, and therefore equal to $C^*(\Gamma)$ for some classical group $\Gamma$. This group is nothing but $D_4$. To see this, take $x=\rho_{11}+\sigma_{11}$ and $y=\rho_{10}+\sigma_{10}$. It is not difficult to verify that $y^2=x^4=1$ and $yxy=x^{-1}$. Moreover, $x$ and $y$ do not admit any other independent relations. So $\mathcal{A}_2$ is the group algebra of $D_4$, and thus the lattice of idempotent states on $\mathcal{A}_2$ is nothing but the lattice of subgroups of $D_4$.
\end{example}

\section{Convergence of convolution powers of states on Sekine quantum groups}
We end this paper by saying a few words on the convergence of convolution powers of states on Sekine quantum groups. This is related to random walks on quantum groups \cite{FGquantumindependentincrementprocess,JPMcCarthy}.

Fix a state $\mu=\sum_{i,j\in \mathbb{Z}_k}\alpha_{i,j}\widetilde{d}_{i,j}+\sum_{r,s\in\mathbb{Z}_k}\kappa_{r,s}\widetilde{e}_{r,s}$ on $\mathcal{A}_k$. Then clearly $\{\mu^{\star n}\}_{n\geq1}$ converges if and only if $\{\hat{\mu}(\pi_{p,q})^{n}\}_{n\geq1}$ converges for all $p,q\in\mathbb{Z}_k$. The following proposition gives a sufficient condition that guarantees the convergence. 

\begin{prop}\label{random walk convergence}
	Let $\mu$ be as above. Then $\{\mu^{\star n}\}_{n\geq1}$ converges if $\alpha_{0,0}>0$.
\end{prop}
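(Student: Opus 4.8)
The plan is to reduce, as noted just before the statement, to the powers of the Fourier transforms. By \eqref{Fourier transform} one has $\widehat{\mu^{\star n}}(\pi_{p,q})=\hat{\mu}(\pi_{p,q})^{n}$, and the matrices $\hat{\mu}(\pi_{p,q})$ over all $p,q\in\mathbb{Z}_k$ determine $\mu$ by Fourier inversion, so $\{\mu^{\star n}\}$ converges if and only if $\{M^{n}\}$ converges for every $p,q$, where $M:=\hat{\mu}(\pi_{p,q})$. Thus I fix $p,q$ and analyse the single $2\times2$ matrix $M$. First I would record that $M$ is a contraction: since $\mu$ is a state, the map $\Phi:=\mu\otimes\iota$ on $\mathcal{A}_k\otimes M_2(\mathbb{C})$ is unital and completely positive, and $\pi_{p,q}$ is a unitary representation with $\pi_{p,q}^{*}\pi_{p,q}=1$, so the Schwarz inequality for unital completely positive maps yields $M^{*}M=\Phi(\pi_{p,q})^{*}\Phi(\pi_{p,q})\le\Phi(\pi_{p,q}^{*}\pi_{p,q})=I$, that is $\|M\|\le1$. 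In particular every eigenvalue of $M$ lies in the closed unit disc.

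The crux is to exploit $\alpha_{0,0}>0$ to show that the only eigenvalue of $M$ that can lie on the unit circle is $1$. If $\alpha_{0,0}=1$ then $\mu=\widetilde{d}_{0,0}$ and $\mu^{\star n}=\mu$ trivially, so assume $0<\alpha_{0,0}<1$ and consider the convex decomposition $\mu=\alpha_{0,0}\,\epsilon+(1-\alpha_{0,0})\,\mu'$, where $\epsilon:=\widetilde{d}_{0,0}$ and $\mu':=(1-\alpha_{0,0})^{-1}(\mu-\alpha_{0,0}\epsilon)$. The point is that $\mu'$ is again a state: for $x\ge0$ in $\mathcal{A}_k$ all commutative coefficients $x_{m,n}$ are nonnegative and the $M_k(\mathbb{C})$-block $X$ is positive semi-definite, so $(\mu-\alpha_{0,0}\epsilon)(x)=\sum_{(m,n)\ne(0,0)}\alpha_{m,n}x_{m,n}+\sum_{r,s}\kappa_{r,s}X_{r,s}\ge0$ using $\alpha_{m,n}\ge0$ and $K=[\kappa_{r,s}]\ge0$, while $\mu'(1)=1$ is immediate. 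A direct computation from the definitions of $\rho_{p,q}$ and $\sigma_{p,q}$ gives $\epsilon(\rho_{p,\pm q})=1$ and $\epsilon(\sigma_{p,\pm q})=0$, hence $\hat{\epsilon}(\pi_{p,q})=I_2$. Therefore $M=\alpha_{0,0}I+(1-\alpha_{0,0})M'$, where $M':=\hat{\mu'}(\pi_{p,q})$ is again a contraction by the same argument.

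Now the eigenvalues of $M$ are exactly the numbers $\lambda=\alpha_{0,0}+(1-\alpha_{0,0})\lambda'$ with $\lambda'$ an eigenvalue of $M'$, so $|\lambda'|\le1$. Then $|\lambda|\le\alpha_{0,0}+(1-\alpha_{0,0})|\lambda'|\le1$, and equality forces $|\lambda'|=1$ with $\lambda'$ aligned with $1$, i.e. $\lambda'=1$ and $\lambda=1$; geometrically the disc of radius $1-\alpha_{0,0}$ about $\alpha_{0,0}$ is internally tangent to the unit circle, meeting it only at the point $1$. So every modulus-one eigenvalue of $M$ equals $1$.

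Finally I would conclude by the elementary classification of $2\times2$ contractions whose only possible unimodular eigenvalue is $1$: if both eigenvalues have modulus $<1$ then $M^{n}\to0$; if they are $1$ and some $\lambda$ with $|\lambda|<1$ then $M$ is diagonalizable and $M^{n}$ converges to the spectral projection onto the $1$-eigenspace; and if both equal $1$, a nontrivial Jordan block is excluded since it would make $\|M^{n}\|$ unbounded, against $\|M^{n}\|\le1$, forcing $M=I$. In every case $M^{n}$ converges, and since $p,q$ were arbitrary the convergence of $\{\mu^{\star n}\}$ follows. I expect the only genuinely delicate step to be the verification that $\mu-\alpha_{0,0}\epsilon$ remains a positive functional, which is what legitimizes the convex decomposition; once that and the contraction estimate are in hand, the spectral localization and the $2\times2$ power-convergence are routine. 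The role of the hypothesis $\alpha_{0,0}>0$ is precisely to push all spectral mass on the unit circle onto the single point $1$, thereby ruling out the oscillating eigenvalues (such as $-1$ or non-real unimodular values) that would otherwise obstruct convergence.
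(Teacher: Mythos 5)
Your proof is correct, and its core mechanism is genuinely different from the paper's. The shared skeleton: both reduce to convergence of $\hat{\mu}(\pi_{p,q})^{n}$ for each fixed $p,q$, both observe that $M:=\hat{\mu}(\pi_{p,q})$ is a contraction (the paper writes $\Vert(\mathrm{id}\otimes\mu)(\pi_{p,q})\Vert\leq1$ for the unitary $\pi_{p,q}$; your Kadison--Schwarz argument is the same fact), and both must (a) show every unimodular eigenvalue of $M$ equals $1$ and (b) exclude the Jordan block $\begin{psmallmatrix*}1&1\\0&1\end{psmallmatrix*}$. The difference is how (a) and (b) are achieved. The paper works with the characteristic equation $\left(\lambda-\mu(\rho_{p,q})\right)\left(\lambda-\mu(\rho_{p,-q})\right)=\mu(\sigma_{p,q})\mu(\sigma_{p,-q})$, proves the explicit bounds $|\mu(\rho_{p,\pm q})|\leq\sum_{i,j}\alpha_{i,j}$ and $|\mu(\sigma_{p,\pm q})|\leq\sum_{r}\kappa_{r,r}$ (the latter from $K\geq0$), squeezes a chain of inequalities when $|\lambda|=1$ to get $|\mu(\rho_{p,\pm q})|=\sum_{i,j}\alpha_{i,j}$, and only then invokes $\alpha_{0,0}>0$ to align phases and force $\lambda=1$; step (b) is handled by a trace argument ($\lambda_1=\lambda_2=1$ forces $M=I$). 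You instead use the convex splitting $\mu=\alpha_{0,0}\epsilon+(1-\alpha_{0,0})\mu'$ with $\epsilon=\widetilde{d}_{0,0}$, verify that $\mu'$ is a state (the step that uses $\alpha_{i,j}\geq0$ and $K\geq0$) and that $\hat{\epsilon}(\pi_{p,q})=I$, so $M=\alpha_{0,0}I+(1-\alpha_{0,0})M'$ with $M'$ a contraction; the spectrum of $M$ then sits in the disc of radius $1-\alpha_{0,0}$ about $\alpha_{0,0}$, internally tangent to the unit circle at $1$, which gives (a), while (b) follows from power-boundedness $\Vert M^{n}\Vert\leq1$. Your route is softer and more portable: it never touches the structure of $\rho_{p,q},\sigma_{p,q}$ beyond $\hat{\epsilon}=I$, and essentially the same argument shows that on any finite quantum group, convolution powers of a state dominating a positive multiple of the counit converge. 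What the paper's hands-on estimates buy in exchange is the refinement recorded in the remark that follows the proposition: they make visible that $\alpha_{0,0}>0$ is used only to break a possible common phase among the terms $\alpha_{i,j}\eta^{ip+jq}$, so it can be weakened to $\sharp\{\eta^{ip+jq}:\alpha_{i,j}\neq0\}\geq2$ for all $p,q$ --- a conclusion your tangency argument does not directly yield.
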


\begin{proof}
	Fix $p,q\in\mathbb{Z}_k$. Denote by $\lambda_1,\lambda_2$ the eigenvalues of $\hat{\mu}(\pi_{p,q})$. Let $\lambda\in\{\lambda_1,\lambda_2\}$, then we have 
	\begin{equation*}
	\left(\lambda-\mu(\rho_{p,q})\right)\left(\lambda-\mu(\rho_{p,-q})\right)
	=\mu(\sigma_{p,q})\mu(\sigma_{p,-q}).
	\end{equation*}
	Since $\pi_{p,q}$ is unitary, $\Vert\hat{\mu}(\pi_{p,q}) \Vert=\Vert(\text{id}_{\mathbb{M}_2(\mathbb{C})}\otimes\mu)(\pi_{p,q})\Vert\leq1$. Thus $|\lambda|\leq 1$.
	Note that 
	\begin{equation}\label{murholeqsumalpha}
	|\mu(\rho_{p,q})|
	=|\sum_{i,j\in\mathbb{Z}_k}\alpha_{i,j}\eta^{ip+jq}|
	\leq\sum_{i,j\in\mathbb{Z}_k}\alpha_{i,j},
	\end{equation}
	and from $K=[\kappa_{r,s}]_{r,s\in\mathbb{Z}_k}\geq0$ it follows that
	\begin{equation*}
	|\mu(\sigma_{p,q})|
	=|\sum_{l\in\mathbb{Z}_k}\kappa_{l,l+p}\eta^{ql}|
	\leq\sum_{l\in\mathbb{Z}_k}|\kappa_{l,l+p}|\leq\frac{1}{2}\sum_{l\in\mathbb{Z}_k}(\kappa_{l,l}+\kappa_{l+p,l+p})
	=\sum_{r\in\mathbb{Z}_k}\kappa_{r,r}.
	\end{equation*}
	If $|\lambda|=1$, then the equations above yield
	\begin{align*}
	(1-\sum_{i,j\in\mathbb{Z}_k}\alpha_{i,j})^2
	&\leq\left(|\lambda|-|\mu(\rho_{p,q})|\right)\left(|\lambda|-|\mu(\rho_{p,-q})|\right)\\
	&\leq|\left(\lambda-\mu(\rho_{p,q})\right)\left(\lambda-\mu(\rho_{p,-q})\right)|\\
	&=|\mu(\sigma_{p,q})\mu(\sigma_{p,-q})|\\
	&\leq(1-\sum_{i,j\in\mathbb{Z}_k}\alpha_{i,j})^2.
	\end{align*}
	Hence
	\begin{equation*}
	(1-\sum_{i,j\in\mathbb{Z}_k}\alpha_{i,j})^2
	=\left(1-|\mu(\rho_{p,q})|\right)\left(1-|\mu(\rho_{p,-q})|\right),
	\end{equation*}
	which gives $|\mu(\rho_{p,q})|=|\mu(\rho_{p,-q})|=\sum_{i,j\in\mathbb{Z}_k}\alpha_{i,j}$. Since $\alpha_{0,0}>0$, we have $\mu(\rho_{p,q})=\mu(\rho_{p,-q})=\sum_{i,j\in\mathbb{Z}_k}\alpha_{i,j}>0$. So $\lambda$ can be nothing but 1. That is to say, $\lambda\in\{z\in\mathbb{Z}:|z|<1\}\cup\{1\}$. Hence $\{\hat{\mu}(\pi_{p,q})^n\}_{n\geq1}$ converges if $\hat{\mu}(\pi_{p,q})$ is not similar to the Jordan normal form 
	$\begin{psmallmatrix*}
	1&1\\
	0&1
	\end{psmallmatrix*}.$
	
	If $\lambda_1=\lambda_2=1$, then from 
	\[
	2=\lambda_1+\lambda_2
	=\mu(\rho_{p,q})+\mu(\rho_{p,-q})
	\leq 2\sum_{i,j\in\mathbb{Z}_k}\alpha_{i,j}
	\leq2
	\] 
	it follows that $\mu(\rho_{p,q})=\mu(\rho_{p,-q})=\sum_{i,j\in\mathbb{Z}_k}\alpha_{i,j}=1$. Thus $\mu(\sigma_{p,q})=\mu(\sigma_{p,-q})=\sum_{r\in\mathbb{Z}_k}\kappa_{r,r}=0$. That is to say, $\hat{\mu}(\pi_{p,q})$ equals identity, not similar to the Jordan normal form as above, which finishes the proof.
\end{proof}

\begin{rem}
	In the proof we only used $\alpha_{0,0}>0$ to deduce $\mu(\rho_{p,q})=\mu(\rho_{p,-q})=\sum_{i,j\in\mathbb{Z}_k}\alpha_{i,j}$ from $|\mu(\rho_{p,q})|=|\mu(\rho_{p,-q})|=\sum_{i,j\in\mathbb{Z}_k}\alpha_{i,j}$. Recalling \eqref{murholeqsumalpha}, to make sure that $\{\mu^{\star n}\}_{n\geq 1}$ converges when $\alpha_{0,0}=0$, it suffices to assume $\sharp\{\eta^{ip+jq}:\alpha_{i,j}\ne 0\}\geq 2$ for all $p,q\in\mathbb{Z}_k$.
\end{rem}

\subsection*{Acknowledgment}
Part of this work was done during a visit to Seoul National University. The author would like to thank Hun Hee Lee and Xiao Xiong for their hospitality. He would also like to thank Adam Skalski and Uwe Franz for many useful comments and pointing out some mistakes in an earlier version of the paper. The research was partially supported by the NCN (National Centre of Science) grant 2014/14/E/ST1/00525, the French ``Investissements d'Avenir" program, project ISITE-BFC (contract ANR-15-IDEX-03) and NSFC No. 11431011.

\end{document}